\newtheorem{theorem}{Theorem}[section]
\newtheorem{proposition}[theorem]{Proposition}
\newtheorem{lemma}[theorem]{Lemma}
\newcommand{\AAA}{{\mathsf{A}}}
\newcommand{\AAK}{\mathsf{A}_k}
\newcommand{\BB}{{\mathsf{B}}}
\newcommand{\CC}{{\mathsf{C}}}
\newcommand{\RR}{{\mathsf{R}}}
\newcommand{\QQ}{{\mathsf{Q}}}
\newcommand{\PP}{{\mathsf{P}}}
\newcommand{\II}{{\mathsf{I}}}
\newcommand{\WW}{{\mathsf{W}}}
\newcommand{\JJ}{{\mathcal{J}}}
\newcommand{\YY}{{\mathsf{Y}}}
\newcommand{\wi}{w_{i}}
\newcommand{\wj}{w_{j}}
\newcommand{\Rnn}{{\mathbb{R}^{n\times n}}}
\newcommand{\Rmn}{{\mathbb{R}^{m\times n}}}
\newcommand{\Rpm}{{\mathbb{R}^{p\times m}}}
\newcommand{\xx}{{\mathbf{x}}}
\newcommand{\yy}{{\mathbf{y}}}
\newcommand{\zz}{{\mathbf{z}}}
\newcommand{\OO}{{\mathbf{0}}}
\newcommand{\bb}{{\mathbf{b}}}
\newcommand{\cc}{{\mathbf{c}}}
\newcommand{\uu}{{\mathbf{u}}}
\newcommand{\aaa}{{\mathbf{a}}}
\newcommand{\qq}{{\mathbf{q}}}
\newcommand{\ee}{{\mathbf{e}}}
\newcommand{\rem}[1]{\textcolor{black}{#1}}
\newcommand{\nullspace}[1]{{\mathcal{N}(#1)}}
\DeclareMathOperator*{\argmin}{arg\,min}
\DeclareMathOperator*{\argmax}{arg\,max}
\title{Weighting operators for sparsity regularization}
\author{Ole L{\o}seth Elvetun, Bj{\o}rn Fredrik Nielsen and Niranjana Sudheer}
\begin{document}

\maketitle

\begin{abstract}
Standard regularization methods typically favor solutions which are in, or close to, the orthogonal complement of the null space of the forward operator/matrix $\AAA$. This particular biasedness might not be desirable in applications and can lead to severe challenges when $\AAA$ is non-injective. 

We have therefore, in a series of papers, investigated how to “remedy” this fact, relative to a chosen basis and in a certain mathematical sense: Based on a weighting procedure, it turns out that it is possible to modify both Tikhonov and sparsity regularization such that each member of the chosen basis can be almost perfectly recovered from their image under $\AAA$.  In particular, we have studied this problem for the task of using boundary data to identify the source term in an elliptic PDE. However, this weighting procedure involves $\AAA^\dagger \AAA$, where $\AAA^\dagger$ denotes the pseudo inverse  of $\AAA$, and can thus be CPU-demanding and lead to undesirable error amplification. 

We therefore, in this paper, study alternative weighting approaches and prove that some of the recovery results established for the methodology involving $\AAA^\dagger$ hold for a broader class of weighting schemes. In fact, it turns out that "any" linear operator $\BB$ has an associated proper weighting defined in terms of images under $\BB\AAA$. We also present a series of numerical experiments, employing different choices of $\BB$.   

\end{abstract}

\section{Introduction}

Consider the linear system
\begin{equation}\label{eq:dip}
    \AAA\xx = \yy,
\end{equation}
where $\AAA \in \mathbb{R}^{m \times n}$ has a non-trivial null space. Such problems typically arise in feature selection, signal processing or from the discretization of linear inverse problems. 
Since the matrix $\AAA$ has a null space, it is clear that there does not exist a unique solution to this problem, and a choice has to be made of which kind of solution one seeks. 

For several applications, it makes sense to search for a sparse solution, i.e., \rem{a solution $\xx$ with only a few nonzero components.} 
%where the data is explained by as few components of $\xx$ as possible. 
A popular method to derive such solutions, which has gained much attention in recent decades, is the $\ell^1$-regularization, also known as LASSO \cite{duval2017sparse,Fuchs04,grasmair2011necessary,tibshirani1996regression}. 

The "true" sparsity promoting regularizer would be the cardinality of the support of $\xx$, i.e., the number of nonzero entries in $\xx$. However, this function, which is referred to as the $\|\cdot\|_0$-norm is not convex and results in an NP-hard problem. The $\ell^1$-regularization, however, has shown to be a good proxy in many applications and several important results have been established. To mention a few, recovery of the sources can be guaranteed if the \textit{restricted isometry property} (RIP) \cite{candes05} is satisfied, when the matrix has \textit{low incoherence} \cite{Donoho03}, or if there exists a certain bound on the \textit{exact recovery condition} (ERC) \cite{Tropp04}.

Nevertheless, there are several problems which \rem{neither standard Tikhonov nor standard sparsity regularization} handle very well, e.g., inverse source problems, of which the inverse EEG problem maybe the most well-known. Essentially, the null space of the forward operator causes severe additional challenges: In the limit of a regularized problem, we study %the problem 
\begin{equation*}
    \min_{\xx} \mathcal{R}(\xx) \quad \textnormal{subject to} \quad \AAA\xx = \bb,
\end{equation*}
where $\mathcal{R}$ denotes the regularization functional. \rem{From the first order optimality conditions for the associated Lagrangian, it follows that the optimal solution $\xx^*$ must satisfy 
\begin{equation*}
    \exists \textbf{p} \in \partial \mathcal{R}(\xx^*): \quad p \in \textnormal{Ran}(\AAA^T) = \textnormal{Nul}(\AAA)^\perp, 
\end{equation*}
using the symbol $\partial$ for the subgradient.} 
Consequently, for the most popular choices of $\mathcal{R}$, such as the $\ell^2$- or $\ell^1$-norm, we obtain solutions which are in (or strongly influenced by) the orthogonal complement of the null space of $\AAA$, \rem{cf. Appendix A in \cite{Elv21c} for further details about this issue when standard sparsity regularization is applied to recover the source term in an elliptic PDE}. From a mathematical point of view, this is, for example, what causes the so-called depth bias in the inverse EEG problem \cite{fuchs1999linear,grave1997linear, pascual1999review}.  

\rem{That such biases can occur is well-known, and several suggestions have been made to rectify them \cite{candes2008enhancing,gorodnitsky1995neuromagnetic,Knudsen_2015,lin2006assessing,lucka2012hierarchical,pascual2002standardized,xu07}.} In \cite{Elv21,Elv21c,Elv22,Elv24}  we propose and analyze a weighting scheme defined in terms of the orthogonal projection $\PP=\AAA^\dagger \AAA$ onto the orthogonal complement of the null space of $\AAA$. With this approach, it turns out that a number of almost perfect recovery results can be proven for some classes of source terms. Nevertheless, the method involves the pseudo inverse $\AAA^\dagger$ and is thus CPU-demanding and can lead to severe error amplification. 

These observations motivate the present investigation. That is, we explore alternatives to $\AAA^\dagger$, i.e., weights defined in terms of $\BB \AAA$, where $\BB$ is a linear operator. It turns out that "any" $\BB$ has an associated set of weights for which theorems similar to those presented in \cite{Elv21,Elv21c,Elv22,Elv24} can be established. 
%we attempt to complement this theory with a more general class of weighting operators for which we also can establish theoretical recovery properties. 
This is the main result of the present paper, which is discussed in detail in Section \ref{sec:analysis}. \rem{Section \ref{weighting} contains the definition of the weights and a motivating example.} We close the paper with a series of numerical experiments in Section \ref{sec:numerical_experiments}, illuminating different choices of $\BB$. 

\section{\rem{Weighting and motivation}} %{Problem formulation}
\label{weighting}
Clearly, if $\xx$ solves \eqref{eq:dip}, then it also solves
\begin{equation}\label{eq:mdip}
    \BB\AAA\xx = \BB\yy,
\end{equation}
for any matrix $\BB \in \Rpm$. Now, the matrix $\BB$ can, for example, be the (square root of the) posterior covariance matrix \cite{calvetti2007preconditioned,calvetti2018inverse} - assuming some specific noise, or a specific matrix chosen to enhance some properties in the inverse solution. We will return to this issue in more detail below, but for now, we simply define 
\begin{equation} \label{def:C}
  \CC = \BB\AAA,  
\end{equation}
and consider the variational formulation
\begin{equation}\label{eq:varform}
    \min_\xx \left\{\frac{1}{2}\|\CC\xx - \BB\yy\|_2^2 + \alpha \|\WW\xx\|_1\right\},
\end{equation}
where the diagonal weight matrix $\WW \in \Rnn$ is defined by
\begin{equation}\label{eq:W}
    \WW\ee_i = \wi\ee_i := \|\CC\ee_i\|_2\ee_i \quad \mbox{for } i=1,2,\ldots, n, 
\end{equation}
and $\ee_i$ denotes the standard Euclidean unit basis vector. 
That is, the diagonal entries of $\WW$ are given by 
\begin{equation*}
    \wi = \|\CC\ee_i\|_2, \quad i=1,2, \ldots, n. 
\end{equation*}

For the uniqueness part of some of our results, we need the assumption
\begin{equation}
    \CC \ee_l \neq c \CC \ee_q \quad \mbox{\rem{for all} } l \neq q, c \in \mathbb{R}. \label{eq:nonpar}
\end{equation}
That is, the images under $\CC$ of any two different standard basis vectors must not be parallel. Note that \eqref{eq:nonpar} asserts that none of the basis vectors $\ee_1, \ee_2, \ldots, \ee_n$ belong to the null space of $\CC$. 

The purpose of the present paper is to investigate whether \eqref{eq:varform} can yield more adequate solutions than standard sparsity regularization ($\WW = \II$ and $\BB = \II$). We will present both theoretical and numerical results which illuminate the benefits of the weighting.  

\subsubsection*{Remark} Multiplying with $\BB$ does not change the overall structure of \eqref{eq:dip}. We could therefore have studied a weighted-regularized version of \eqref{eq:dip} instead of \eqref{eq:varform}. Nevertheless, in order to emphasize the role of the choice of $\BB$, we prefer the form \eqref{eq:varform}.   

\subsubsection*{\rem{Motivating example}}
\rem{Let us} consider the task of computing the source term in an elliptic PDE from boundary data: 
\begin{equation}
    \min_{f,u} \| u - d \|^{2}_{L^2( \partial \Omega)}
    \label{eq: problem}
\end{equation}
subject to 
\begin{equation}
    \begin{split}
        -\Delta u + \epsilon u &= f \quad \mbox{in } \Omega, \\
        \frac{\partial u}{\partial \mathbf{n}} &= 0 \quad \mbox{on } \partial \Omega, 
    \end{split}
    \label{eq: bdry_val_prob}
\end{equation}
where $d$ represents Dirichlet boundary data and $\Omega$ denotes the unit square with boundary $\partial \Omega$. Upon discretization, and employing sparsity regularization, we obtain a problem in the form \eqref{eq:varform}, where $\AAA$ is the product of a restriction-to-the-boundary-matrix and the inverse of the matrix associated the differential operator $-\Delta u + u$. Also, $\BB$ is a matrix with suitable dimensions. 

Figure \ref{random_intro} shows the numerical results obtained by solving \eqref{eq:varform} with $\alpha = 10^{-4}$, employing a matrix $\BB$ with random content (drawn from a uniform distribution). A coarse $16 \times 16$ mesh was employed for both forward and inverse computation. The true source is depicted in Figure \ref{true_intro}.  More specifically, $\yy=\AAA \ee_j$ where $j$ is the index associated with the "cell" of the true source, i.e., \eqref{eq:varform} reads, in this special synthetic case, 
\begin{equation} \label{eq:varform_simple_org}
\min_\xx \left\{\frac{1}{2}\|\CC\xx - \CC \ee_j \|_2^2 + \alpha \|\WW\xx\|_1\right\}. 
\end{equation}

We observe that the weighted version successfully recovers the true source, and that the standard approach ($\WW = \II$ and $\BB=\II$) does not produce adequate results, compare panels (b) and (c) in Figure \ref{Intro_eg}.

\begin{figure}[H]
    \centering
    \begin{subfigure}[b]{0.6\linewidth}        %% or \columnwidth
        \centering
        \includegraphics[width=\linewidth]{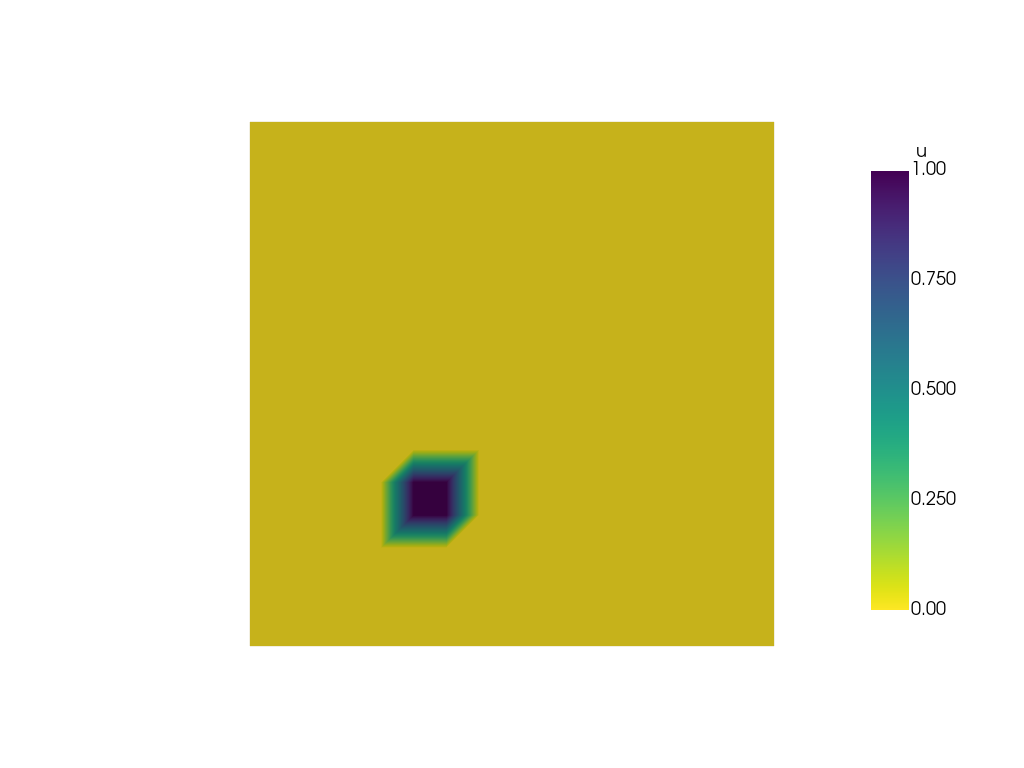}
        \vspace{-3em}
        \caption{True source}
        \label{true_intro}
    \end{subfigure}\par
    \begin{subfigure}[b]{0.6\linewidth}        %% or \columnwidth
        \centering
        \includegraphics[width=\linewidth]{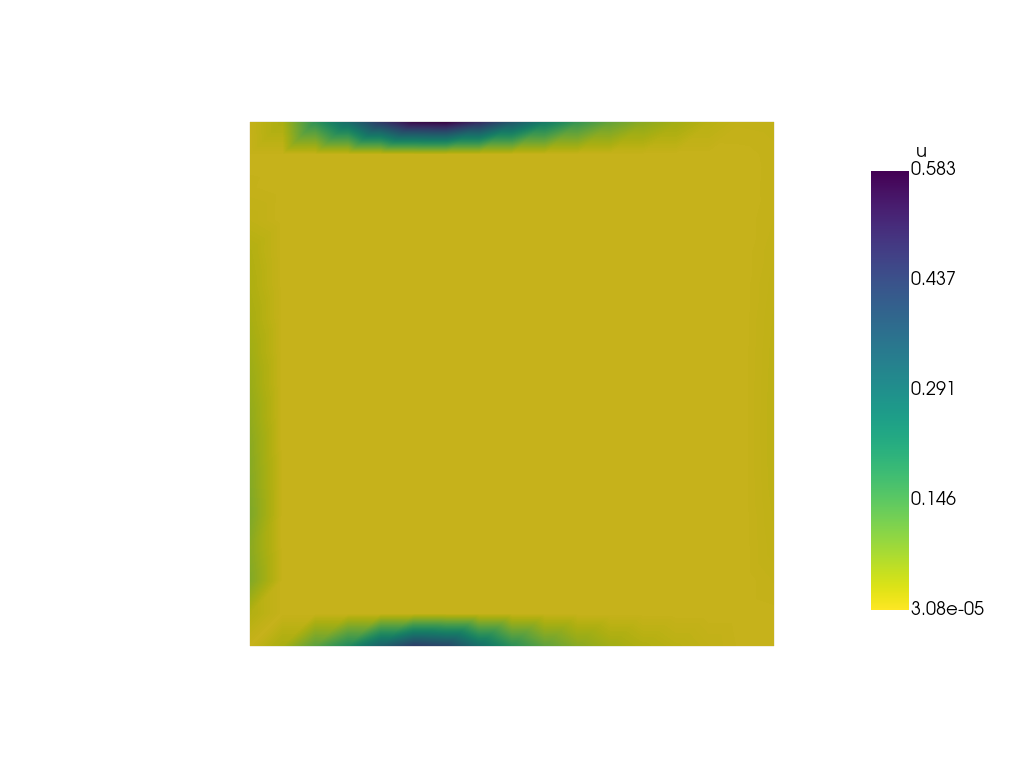}
        \vspace{-3em}
        \caption{Unweighted}
        \label{identity_intro_p}
    \end{subfigure}\par
    \begin{subfigure}[b]{0.6\linewidth}        %% or \columnwidth
        \centering
        \includegraphics[width=\linewidth]{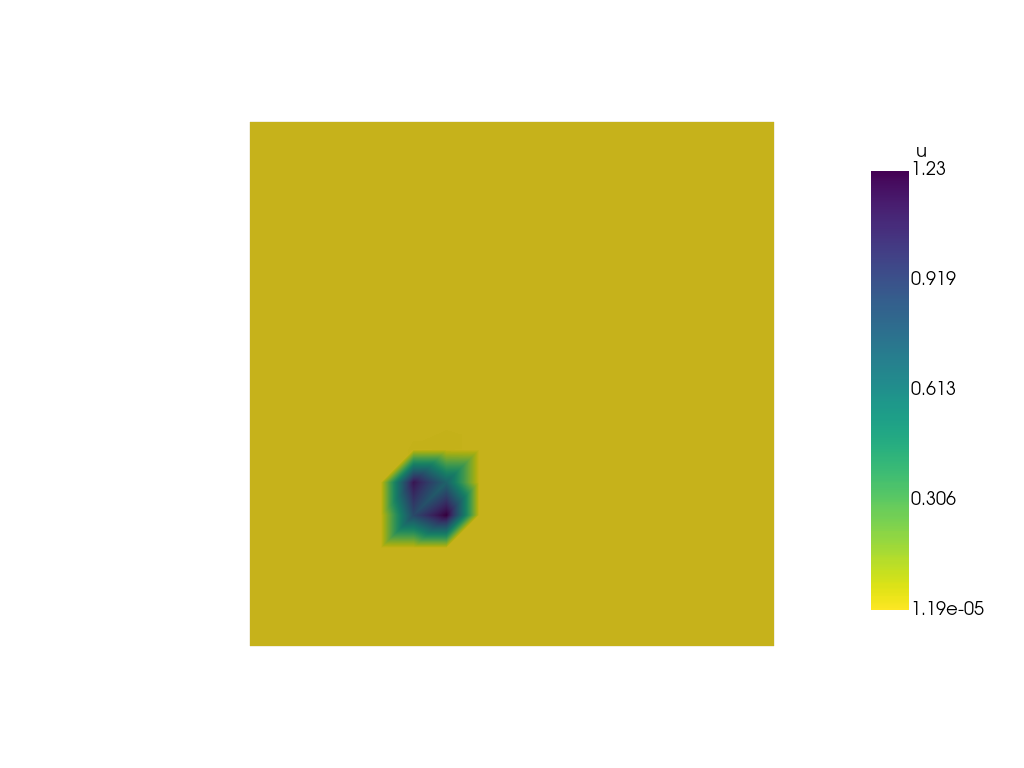}
        \vspace{-3em}
        \caption{Weighted}
        \label{random_intro}
    \end{subfigure}
    \caption{Comparison of standard and weighted sparsity regularization for the screened Poisson problem \eqref{eq: problem} - \eqref{eq: bdry_val_prob}, using $\epsilon = 1$. \rem{Case (b): $\WW=\II$ and $\BB=\II$. Case(c): $\WW$ is as defined in \eqref{eq:W} and $\BB$ has random content.}}
   \label{Intro_eg}
\end{figure}

\section{Analysis}
\label{sec:analysis}
The particular choice $\BB = \AAA^\dagger$, i.e., employing the pseudo inverse of $\AAA$, has been analyzed in a series of papers \cite{Elv21, Elv21c, Elv22}. In this case, $\CC = \AAA^\dagger \AAA = \PP$ becomes the orthogonal projection onto the orthogonal complement of the null space $\nullspace{\AAA}$ of $\AAA$, 
\begin{equation*}
    \PP:\mathbb{R}^n \rightarrow \nullspace{\AAA}^\perp. 
\end{equation*}
With this choice of $\BB$, one can prove that both single and multiple sources can be (approximately) recovered by solving \eqref{eq:varform}, provided that suitable assumptions are fulfilled; \rem{see \cite{Elv21c,Elv24}.} 

However, it might be CPU demanding to compute $\AAA^\dagger$ and employing $\AAA^\dagger$ will typically lead to significant error amplification when $\AAA$ has small positive singular values. One therefore must use an approximation of $\AAA^\dagger$, e.g., the approximation generated by a truncated SVD procedure, $\BB = \AAK^\dagger$, or by invoking Tikhonov regularization. The analysis presented in \cite{Elv21,Elv21c,Elv22} mainly only addresses the case $\BB = \AAA^\dagger$, and not $\BB = \AAA_k^\dagger$, which is rectified by the present paper. 

Furthermore, it turns out that any reasonable matrix $\BB$ yields a weighting that satisfies some basic recovery properties. The proofs of these results are similar to those published in the above mentioned papers, and we thus present them in the appendices, except for two short arguments. Note that Proposition \ref{prop:new_B}, Lemma \ref{lemma:parallel} and Theorem \ref{thm:almost_same_images} have no counterparts in the investigations conducted in our previous work.  

Motivated by the findings presented in Section \ref{weighting}, we will analyze the zero-regularization limit associated with \eqref{eq:varform_simple_org}. More precisely, in the limit $\alpha \rightarrow 0$, the minimization problem \eqref{eq:varform_simple_org} becomes a so-called basis pursuit problem. We now prove the (surprising) fact that "any" $\BB$ used to generate the weights \eqref{eq:W}, see also \eqref{def:C}, will guarantee the recovery of $\ee_j$ from its image $\AAA\ee_j$:
\begin{theorem} \label{thm:basis_pursuit}
    Let $\WW$ be defined as in \eqref{eq:W} and assume that \eqref{eq:nonpar} holds. Then
    \begin{equation}
        \ee_j = \argmin_{\xx} \|\WW\xx\|_1 \quad \textnormal{subject to} \ \ \AAA\xx = \AAA\ee_j. \label{eq:bp}
    \end{equation}
\end{theorem}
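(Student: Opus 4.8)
The plan is to exploit the fact that, although the feasibility constraint in \eqref{eq:bp} only involves $\AAA$, it forces every feasible $\xx$ to share the same image under $\CC = \BB\AAA$. Indeed, if $\AAA\xx = \AAA\ee_j$, then $\CC\xx = \BB\AAA\xx = \BB\AAA\ee_j = \CC\ee_j$, so the feasible set is contained in the larger affine set $\{\xx : \CC\xx = \CC\ee_j\}$. Since $\ee_j$ is itself feasible, it suffices to prove that $\ee_j$ is the unique minimizer of $\|\WW\xx\|_1$ over this larger set.

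The existence part reduces to a single application of the triangle inequality. Writing $\CC\xx = \sum_{i=1}^n x_i \CC\ee_i$ and recalling that $w_i = \|\CC\ee_i\|_2$, I would estimate, for every $\xx$ with $\CC\xx = \CC\ee_j$,
\begin{equation*}
    \|\WW\xx\|_1 = \sum_{i=1}^n w_i |x_i| = \sum_{i=1}^n |x_i|\,\|\CC\ee_i\|_2 \geq \left\|\sum_{i=1}^n x_i \CC\ee_i\right\|_2 = \|\CC\xx\|_2 = \|\CC\ee_j\|_2 = w_j = \|\WW\ee_j\|_1.
\end{equation*}
This already shows that $\ee_j$ attains the minimum.

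For uniqueness I would analyze the equality case. Equality in the triangle inequality $\|\sum_i v_i\|_2 \leq \sum_i \|v_i\|_2$ applied to $v_i = x_i \CC\ee_i$ forces all nonzero summands to be non-negative scalar multiples of a common direction. If two components $x_l, x_q$ with $l \neq q$ were both nonzero, this would make $\CC\ee_l$ and $\CC\ee_q$ parallel, contradicting \eqref{eq:nonpar}. Hence at most one entry of $\xx$ can be nonzero, say $\xx = x_k \ee_k$. The constraint then reads $x_k \CC\ee_k = \CC\ee_j$; invoking \eqref{eq:nonpar} once more (which in particular guarantees $\CC\ee_j \neq \OO$) this is possible only when $k = j$ and $x_j = 1$, so $\xx = \ee_j$.

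The only genuinely delicate point is the characterization of equality in the triangle inequality together with the two uses of the non-parallelism condition \eqref{eq:nonpar}; everything else is a direct computation. The conceptual content — and the reason the statement looks surprising — lies entirely in the first step: the constraint phrased in terms of $\AAA$ secretly pins down $\CC\xx$, which is precisely the quantity from which the weights in \eqref{eq:W} are built, so the mismatch between the constraint operator $\AAA$ and the weighting operator $\CC = \BB\AAA$ is only apparent.
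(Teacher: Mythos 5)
Your proof is correct and follows essentially the same route as the paper's: pass from the constraint $\AAA\xx=\AAA\ee_j$ to $\CC\xx=\CC\ee_j$, expand $\CC\xx=\sum_i x_i\CC\ee_i$, apply the triangle inequality, and use the non-parallelism assumption \eqref{eq:nonpar} to settle uniqueness. If anything, your equality-case analysis is slightly more careful than the paper's one-line conclusion, since you separately treat the $1$-sparse case $\xx = x_k\ee_k$, where the triangle inequality holds with equality and the contradiction must instead come from the constraint $x_k\CC\ee_k=\CC\ee_j$ together with \eqref{eq:nonpar} --- a case the paper's assertion that the inequality ``becomes strict'' silently absorbs.
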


\begin{proof}
Let \begin{equation*}
     X_j = \{ \xx \in \mathbb{R}^n: \AAA \xx = \AAA \ee_j \},   
\end{equation*} 
and observe that, if $\xx \in X_j$, then $C\xx = C\ee_j$. Assume that  
\begin{equation*}
    \xx \in X_j , \xx = \sum_{i} c_i\ee_i 
     \quad\textnormal{and}\ \
     \xx \neq \ee_j.  
\end{equation*}  
It follows that
\begin{align*}
  \|\WW\ee_j\|_1 & = w_j\\
   &= \|\CC\ee_j\|_2 \\
   &= \|\CC\xx\|_2 \\
   &= \|\CC(\textstyle\sum_{i}c_i\ee_i)\|_2 \\
   &\leq \textstyle\sum_{i}|c_i|\|\CC\ee_i\|_2\\
   &= \textstyle\sum_{i}w_i|c_i|\\
   &= \|\WW\xx\|_1.
\end{align*}    
If we invoke the assumption \eqref{eq:nonpar}, the triangle inequality above becomes strict and we can therefore conclude that $\xx = \ee_j$ uniquely solves \eqref{eq:bp}. 
\end{proof}

\subsubsection*{\rem{Remark}}
\rem{One may also use the RIP or the mutual incoherence approaches to prove Theorem \ref{thm:basis_pursuit}: Since $\CC = \BB \AAA$, it follows that, if $\ee_j$ solves the problem 
\begin{equation} \label{eq:pre_RIP}
    \min_{\xx} \|\WW\xx\|_1 \quad \textnormal{subject to} \ \ \CC\xx = \CC \ee_j, 
\end{equation}
then $\ee_j$ must also solve the minimization problem in \eqref{eq:bp}. 
With the change of variable $\zz = \WW \xx$ we obtain 
\begin{equation} \label{eq:RIP_form}
    \min_{\xx} \|\zz\|_1 \quad \textnormal{subject to} \ \ \CC \WW^{-1} \zz = \CC \WW^{-1} \qq_j,   
\end{equation}
where $$\qq_i = \WW \ee_i = w_i \ee_i = \| \CC \ee_i \|_2 \ee_i, \quad i=1,2,\ldots, n.$$ 
Now, 
\begin{equation*}
    \| \CC \WW^{-1} \qq_i \|_2 =  \| \CC \ee_i \|_2 = \| \qq_i \|_2, \quad i=1,2,\ldots, n, 
\end{equation*}
and hence the RIP condition is fulfilled, see \cite{candes05}. We can therefore conclude that $\qq_j$ and $\ee_j$ solve \eqref{eq:RIP_form} and \eqref{eq:pre_RIP}, respectively. Furthermore, 
\begin{equation*}
    |(\CC \WW^{-1} \ee_k, \CC \WW^{-1} \ee_l)| = \left| \left( \frac{\CC \ee_k}{\| \CC \ee_k \|_2}, \frac{\CC \ee_l}{\| \CC \ee_l \|_2} \right) \right| < 1, \quad k \neq l, 
\end{equation*}
provided that \eqref{eq:nonpar} holds. Hence, $\CC \WW^{-1}$ satisfies the incoherence condition \cite{Donoho03,Tropp04} and it follows that $\qq_j$ is the only solution to \eqref{eq:RIP_form}, keeping in mind that $\qq_j$ equals $\ee_j$ times a scalar. 
}

The computational results reported above in Section \ref{weighting} are not surprising in view of Theorem \ref{thm:basis_pursuit}, even though we employed a matrix $\BB$ with random content to define $\CC$, see \eqref{def:C} and \eqref{eq:W}. 

In order to analyze the regularized problem \eqref{eq:varform_simple_org}, we need a result concerning the mathematical properties of $\WW^{-1}\CC^T\CC\ee_j$. That is, we will prove that the $j$'th component of $\WW^{-1}\CC^T\CC\ee_j$ is the largest component of this vector. One might consider this to be a generalization of Theorem 4.2 in \cite{Elv21}, which proves this result for the special case $\CC = \AAA^\dagger \AAA$, i.e., $\BB=\AAA^\dagger$, cf. \eqref{eq:mdip} and \eqref{def:C}. We will use the following lemma at several occasions below.  
\begin{lemma}
\label{lemma}
    Let $\WW$ be defined as in \eqref{eq:W} and assume that \eqref{eq:nonpar} holds. Then
    \begin{equation} \label{eq:max_property}
        j = \argmax_{i} |(\WW^{-1}\CC^T\CC\ee_j,\ee_i)|.
    \end{equation}
\end{lemma}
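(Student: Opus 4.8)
The plan is to compute the components of the vector $\WW^{-1}\CC^T\CC\ee_j$ explicitly and then control them with a single Cauchy–Schwarz inequality. First I would record that $\WW$ is diagonal with entries $w_i = \|\CC\ee_i\|_2$, and that these are all strictly positive: assumption \eqref{eq:nonpar} forbids any $\ee_i$ from lying in the null space of $\CC$, so $w_i \neq 0$ and $\WW^{-1}$ is well defined. The $i$-th component of the target vector is then
\begin{equation*}
    (\WW^{-1}\CC^T\CC\ee_j, \ee_i) = \frac{1}{w_i}(\CC^T\CC\ee_j, \ee_i) = \frac{(\CC\ee_i, \CC\ee_j)}{\|\CC\ee_i\|_2}.
\end{equation*}

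Next I would evaluate this expression at the candidate maximizer $i = j$, where it collapses to
\begin{equation*}
    \frac{(\CC\ee_j, \CC\ee_j)}{\|\CC\ee_j\|_2} = \|\CC\ee_j\|_2.
\end{equation*}
The heart of the argument is a single application of the Cauchy–Schwarz inequality to the numerator of the general component:
\begin{equation*}
    \left| \frac{(\CC\ee_i, \CC\ee_j)}{\|\CC\ee_i\|_2} \right| \leq \frac{\|\CC\ee_i\|_2 \, \|\CC\ee_j\|_2}{\|\CC\ee_i\|_2} = \|\CC\ee_j\|_2,
\end{equation*}
which shows that the $j$-th component is at least as large in absolute value as every other component, so $j$ is a maximizer.

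To obtain the stated $\argmax$ as a \emph{unique} index, I would upgrade the bound to a strict inequality whenever $i \neq j$. The key observation is that equality in Cauchy–Schwarz holds precisely when $\CC\ee_i$ and $\CC\ee_j$ are linearly dependent, i.e., parallel; assumption \eqref{eq:nonpar} rules this out for $i \neq j$, so the inequality above is strict for every such $i$. Hence $i = j$ is the only index attaining the maximum, which is exactly \eqref{eq:max_property}.

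I do not anticipate a genuine obstacle here, since the statement reduces to the equality case of Cauchy–Schwarz. The only subtlety worth flagging is the passage from the non-strict to the strict inequality, which must be tied carefully to that equality case and thence to the non-parallelism hypothesis \eqref{eq:nonpar}; one should also take care to note at the outset that the same hypothesis guarantees $w_i \neq 0$, so that the quotients above are meaningful.
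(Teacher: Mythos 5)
Your proposal is correct and follows essentially the same route as the paper: compute the $i$-th component of $\WW^{-1}\CC^T\CC\ee_j$ explicitly using the diagonal weights, recognize it as $\|\CC\ee_j\|_2$ times a normalized inner product, and invoke Cauchy--Schwarz together with the non-parallelism assumption \eqref{eq:nonpar} to get strictness for $i \neq j$. Your added remarks (that \eqref{eq:nonpar} guarantees $w_i \neq 0$ so $\WW^{-1}$ is well defined, and the explicit link between strictness and the equality case of Cauchy--Schwarz) are details the paper leaves implicit, but the argument is the same.
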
 
Note the following before we prove this lemma: Considering the equation 
\begin{equation} \label{eq:very_simple_equation}
\CC \xx =\CC \ee_j,
\end{equation} 
we observe that \eqref{eq:max_property} shows that the index $j$ of the "true" source $\ee_j$ can be identified from its 
image $\CC \ee_j$ by employing the inverse of the weight matrix $\WW$. When $\BB = \AAA^\dagger$, 
$\WW^{-1}\CC^T\CC\ee_j = \WW^{-1}\AAA^\dagger\AAA\ee_j$, because $\CC = \AAA^\dagger \AAA$ is a projection, which can be interpreted as a re-weighted version of the minimum norm solution $\AAA^\dagger\AAA\ee_j$ of $\AAA \xx = \AAA \ee_j$.  The proof of Lemma \ref{lemma} is short: 
\begin{proof}
    Recall the definition \eqref{eq:W} of the diagonal weight matrix $\WW$. We have
    \begin{align}
        \nonumber
        (\WW^{-1}\CC^T\CC\ee_j, \ee_i) &= \left(\CC^T\CC\ee_j,\frac{\ee_i}{\wi}\right) \\ &=
        \nonumber
        \left(\CC\ee_j,\frac{C\ee_i}{\wi}\right) \\ &= 
        \label{eq:maximum_expression}
        \|\CC\ee_j\|_2 \left(\frac{\CC\ee_j}{\|\CC\ee_j\|_2},\frac{\CC\ee_i}{\|\CC\ee_i\|_2}\right) \mbox{ for } i=1,2,\ldots,n.
    \end{align}
    The result now follows from the Cauchy-Schwartz inequality and assumption \eqref{eq:nonpar}.
\end{proof}

The result concerning the minimization problem \eqref{eq:varform_simple_org}, studied in Section \ref{weighting}, reads as follows: 
\begin{theorem}\label{thm:varThm}
   Let $\WW$ be defined as in \eqref{eq:W}. %and assume that \eqref{eq:nonpar} holds. 
   Then $\xx_\alpha = \gamma_\alpha \ee_j$ is a minimizer of 
    \begin{equation} \label{eq:varform_simple}
      \min_\xx \left\{\frac{1}{2}\|\CC\xx - \CC\ee_j\|^2 + \alpha \|\WW\xx\|_1\right\},
    \end{equation}
    where $\gamma_\alpha = 1 - \frac{\alpha}{\wj}$. If \eqref{eq:nonpar} holds, then $\xx_\alpha = \gamma_\alpha \ee_j$ is the unique solution of \eqref{eq:varform_simple}. 
\end{theorem}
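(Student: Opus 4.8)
The plan is to treat \eqref{eq:varform_simple} as a convex optimization problem and to characterize its minimizers through the first-order (subgradient) optimality condition. The objective $F(\xx) = \tfrac12\|\CC\xx - \CC\ee_j\|_2^2 + \alpha\|\WW\xx\|_1$ is convex, so $\xx^*$ is a global minimizer if and only if $\mathbf{0} \in \partial F(\xx^*)$, i.e. there exists $\mathbf{p} \in \partial\|\WW\,\cdot\,\|_1(\xx^*)$ with
\begin{equation*}
  \CC^T\CC(\xx^* - \ee_j) + \alpha\mathbf{p} = \mathbf{0}.
\end{equation*}
Since $\WW$ is diagonal with positive entries, $\mathbf{p} \in \partial\|\WW\,\cdot\,\|_1(\xx^*)$ means that $p_i = w_i\operatorname{sgn}(x_i^*)$ whenever $x_i^* \neq 0$ and $|p_i| \le w_i$ whenever $x_i^* = 0$. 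This reduces the whole argument to constructing and analyzing a suitable \emph{certificate} $\mathbf{p}$.

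First I would verify that the proposed $\xx_\alpha = \gamma_\alpha\ee_j$, with $\gamma_\alpha = 1 - \alpha/w_j$, satisfies this condition in the relevant small-$\alpha$ regime $\alpha < w_j$, so that $\gamma_\alpha > 0$. A direct computation gives $\CC^T\CC(\xx_\alpha - \ee_j) = -(\alpha/w_j)\,\CC^T\CC\ee_j$, so the natural candidate is $\mathbf{p} = (1/w_j)\,\CC^T\CC\ee_j$. Reusing identity \eqref{eq:maximum_expression} from the proof of Lemma \ref{lemma}, its components are $p_i = (1/w_j)(\CC\ee_j, \CC\ee_i)$; hence $p_j = \|\CC\ee_j\|_2^2/w_j = w_j = w_j\operatorname{sgn}(\gamma_\alpha)$, while for $i \neq j$ the Cauchy--Schwarz inequality yields $|p_i| = |(\CC\ee_j,\CC\ee_i)|/w_j \le w_i$. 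Thus $\mathbf{p}$ is an admissible subgradient and $\xx_\alpha$ is a minimizer.

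The main obstacle is the uniqueness claim under \eqref{eq:nonpar}, which I would handle in two stages. First, the data-fit term is strictly convex as a function of $\CC\xx$, so every minimizer of $F$ produces the same image $\CC\xx = \gamma_\alpha\CC\ee_j$ (otherwise the midpoint of two distinct minimizers would strictly decrease $F$). Consequently the smooth gradient $\CC^T\CC(\xx - \ee_j)$ is identical at all minimizers, and the certificate $\mathbf{p} = (1/w_j)\,\CC^T\CC\ee_j$ must lie in $\partial\|\WW\,\cdot\,\|_1(\tilde{\xx})$ for any minimizer $\tilde{\xx}$. Second, assumption \eqref{eq:nonpar} makes the Cauchy--Schwarz step strict, giving $|p_i| < w_i$ for every $i \neq j$; the subgradient description then forces $\tilde{x}_i = 0$ for all $i \neq j$, so $\tilde{\xx} = t\ee_j$. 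Finally $\CC\tilde{\xx} = \gamma_\alpha\CC\ee_j$ together with $\CC\ee_j \neq \mathbf{0}$ (also guaranteed by \eqref{eq:nonpar}) pins down $t = \gamma_\alpha$, whence $\tilde{\xx} = \xx_\alpha$.

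I expect the support-localization step to be the delicate point: converting the strict bound $|p_i| < w_i$ into the statement that no minimizer can place mass off the index $j$ relies on all minimizers sharing the same gradient of the smooth term, which rests on strict convexity in the variable $\CC\xx$ rather than in $\xx$ itself (note that $\CC$ need not be injective). The existence half, by contrast, is a short certificate verification built essentially on \eqref{eq:maximum_expression} and Cauchy--Schwarz. One caveat worth flagging is the range of $\alpha$: the formula $\gamma_\alpha = 1-\alpha/w_j$ is consistent with the sign condition $p_j = w_j\operatorname{sgn}(\gamma_\alpha)$ only while $\gamma_\alpha \ge 0$, i.e. $\alpha \le w_j$, which is precisely the regime of interest in the $\alpha \to 0$ limit.
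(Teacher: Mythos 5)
Your proposal is correct, and while its existence half follows the paper's own argument, the uniqueness half takes a genuinely different route. For existence, both you and the paper (Appendix~\ref{proof_thm:varThm}) verify the subgradient optimality condition at $\xx_\alpha=\gamma_\alpha\ee_j$ with the same certificate and reduce everything to \eqref{eq:maximum_expression} plus Cauchy--Schwarz; the two write-ups differ only in whether the condition is phrased via $\partial\|\WW\,\cdot\,\|_1(\xx)$ or via $\WW\,\partial\|\cdot\|_1(\WW\xx)$, which is the same thing since $\WW$ is diagonal and positive. For uniqueness, the paper argues by direct cost comparison: given a competitor $\yy\neq\xx_\alpha$ it splits into the cases $\yy=c\xx_\alpha$ and $\yy\neq c\xx_\alpha$, and in the latter case it modifies the subgradient at $\xx_\alpha$ by setting its $k$'th entry to $\pm 1$ at an index $k\neq j$ where $y_k\neq 0$, obtaining a strictly positive lower bound on the cost difference. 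You instead use the classical LASSO-uniqueness mechanism: (i) strict convexity of the data-fit term in the variable $\CC\xx$ forces all minimizers to share the image $\CC\xx=\gamma_\alpha\CC\ee_j$, hence the same smooth gradient and therefore the same dual certificate $\mathbf{p}$; (ii) assumption \eqref{eq:nonpar} makes the certificate strictly feasible off the index $j$, i.e.\ $|p_i|<w_i$ for $i\neq j$, which forces every minimizer's support into $\{j\}$; (iii) $\CC\ee_j\neq\OO$ pins down the coefficient. Your version is more modular and generalizes immediately to multi-index supports (it is essentially the mechanism behind Theorem~\ref{thm:support}), while the paper's version is more self-contained, trading the shared-fit observation for a slightly more intricate pointwise estimate; both invoke the strict Cauchy--Schwarz inequality granted by \eqref{eq:nonpar} at the same spot. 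Finally, both proofs share the implicit restriction $\gamma_\alpha\geq 0$, i.e.\ $\alpha\leq\wj$, which you rightly flag explicitly and which the paper assumes through the clause ``provided that $\gamma_\alpha>0$''.
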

\begin{proof}
    The proof of this result is similar to the argument for Theorem 4.3 in \cite{Elv21c}. See Appendix \ref{proof_thm:varThm} for further details. 
\end{proof}
Theorem \ref{thm:varThm} asserts that the support of the true source $\ee_j$ is preserved by the solution of \eqref{eq:varform_simple}, cf. the numerical results presented in Section \ref{weighting}. Furthermore, the solution $\xx_\alpha$ of \eqref{eq:varform_simple} converges toward $\ee_j$ as $\alpha \rightarrow 0$. 

The possibility of identifying several sources and sinks can be analyzed in terms of the existence of a Lagrange multiplier $\cc$, also referred to as a dual certificate; see \cite{duval2017sparse,Fuchs04,grasmair10}. %\cite{bernstein2020sparse,duval2017sparse,Fuchs04,grasmair10}. 
The proof of the following theorem is omitted because it is a straightforward generalization of Theorem 4.1 in \cite{Elv24}. 
\begin{theorem}
\label{thm:support} 
 Let $\xx^* = \textstyle\sum_{\mathcal{J}}x_{j}^*\ee_{j}$. Assume that there exists a vector $\cc$ which satisfies the following conditions 
  \begin{align}
  \label{cond_1}
\frac{\CC\ee_i}{\|\CC\ee_i\|}\cdot \cc &= sgn(x_{i}^{*}), \quad \forall i \in \mathcal{J}, \\  
      \label{cond_2}
          \left |\frac{\CC\ee_i}{\|\CC\ee_i\|}\cdot \cc \right| &< 1, \quad  \forall i \in \mathcal{J}^c,                
  \end{align}
where $\mathcal{J} = supp(\xx^*)$ and $\mathcal{J^{\cc}} = \{ 1,2, \dots , n \} \backslash \mathcal{J}. $ Then $\xx^*$ solves the basis pursuit problem  
\begin{equation} \label{eq:basis_pursuit_several_sources_sinks}
  \min_\xx \|\WW \xx\|_1\quad \text{subject to} \quad \AAA \xx = \AAA \xx^*. 
\end{equation}
Furthermore, if $\yy$ is any other solution of \eqref{eq:basis_pursuit_several_sources_sinks}, then  
 \begin{equation*}
     supp(\yy) \subseteq supp(\xx^{*}). 
 \end{equation*}
\end{theorem}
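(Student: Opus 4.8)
The plan is to run the standard dual-certificate (duality) argument for weighted basis pursuit, using the certificate $\cc$ supplied in the hypotheses. First I would reduce the constraint: since $\CC = \BB \AAA$, every feasible $\xx$, i.e. every $\xx$ with $\AAA \xx = \AAA \xx^*$, also satisfies $\CC \xx = \CC \xx^*$, exactly as in the opening lines of the proof of Theorem \ref{thm:basis_pursuit}. Hence it suffices to show that $\xx^*$ minimizes $\|\WW \xx\|_1$ over the (possibly larger) set $\{\xx : \CC \xx = \CC \xx^*\}$, which contains the actual feasible set; proving optimality and the support inclusion over this larger set immediately yields both claims for \eqref{eq:basis_pursuit_several_sources_sinks}.

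The second step is to evaluate the pairing $(\CC \xx) \cdot \cc$ in two different ways. Writing $w_i = \|\CC \ee_i\|_2 > 0$ (strictly positive by \eqref{eq:nonpar}) and $u_i = (\CC \ee_i) \cdot \cc / w_i$, the hypotheses \eqref{cond_1}--\eqref{cond_2} read $u_i = sgn(x_i^*)$ on $\mathcal{J}$ and $|u_i| < 1$ on $\mathcal{J}^c$. Expanding $\xx^* = \sum_{i \in \mathcal{J}} x_i^* \ee_i$ gives $(\CC \xx^*) \cdot \cc = \sum_{i \in \mathcal{J}} x_i^* w_i \, sgn(x_i^*) = \sum_{i \in \mathcal{J}} w_i |x_i^*| = \|\WW \xx^*\|_1$. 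Because $\CC \xx = \CC \xx^*$ for every feasible $\xx$, the same pairing also equals $(\CC \xx) \cdot \cc = \sum_i x_i w_i u_i$.

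Third, I would bound this expression from above using the triangle inequality and $|u_i| \le 1$ for all $i$: $\|\WW \xx^*\|_1 = \sum_i x_i w_i u_i \le \sum_i |x_i| w_i |u_i| \le \sum_i |x_i| w_i = \|\WW \xx\|_1$. This establishes that $\xx^*$ minimizes the weighted $\ell^1$ norm over the feasible set, i.e. that $\xx^*$ solves \eqref{eq:basis_pursuit_several_sources_sinks}.

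Finally, for the support inclusion I would trace the equality case. If $\yy$ is another minimizer, then the chain above must hold with equality, which forces $\sum_i |y_i| w_i (1 - |u_i|) = 0$. Since $w_i > 0$ and $1 - |u_i| > 0$ for every $i \in \mathcal{J}^c$, each term with $i \in \mathcal{J}^c$ must vanish, giving $y_i = 0$ there and hence $supp(\yy) \subseteq \mathcal{J} = supp(\xx^*)$. The only genuinely delicate point — and the step I would treat most carefully — is this equality analysis: I must verify that the strict inequality in \eqref{cond_2} really propagates into a strict penalty on any component outside $\mathcal{J}$, which is precisely where positivity of the weights $w_i$ (guaranteed by \eqref{eq:nonpar}) is essential. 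Everything else is a routine application of the triangle inequality together with the duality pairing.
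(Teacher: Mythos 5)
Your proof is correct and complete: the reduction of the constraint $\AAA\xx = \AAA\xx^*$ to $\CC\xx = \CC\xx^*$, the two evaluations of the pairing $(\CC\xx)\cdot\cc$, the triangle-inequality bound, and the equality analysis forcing $y_i = 0$ for $i \in \mathcal{J}^c$ (using $w_i = \|\CC\ee_i\|_2 > 0$, which is implicit in the well-definedness of \eqref{cond_1}--\eqref{cond_2}) are all sound. The paper itself omits this proof, deferring to Theorem 4.1 of an earlier work, and your argument is exactly the standard dual-certificate argument on which such results rest, so your write-up in effect supplies the details the paper leaves to the citation.
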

\begin{proof}
    Omitted, cf. the argument for Theorem 4.1 in \cite{Elv24}. %See Appendix \ref{proof_thm:support}. 
\end{proof}

The existence of a dual certificate $\cc$ satisfying \eqref{cond_1}-\eqref{cond_2} can be guaranteed under beneficial circumstances. For example, \rem{when a certain disjoint property holds}: 
\begin{theorem} \label{thm:disjoint_supports}
  \rem{Let $\mathcal{J} = \text{supp}(\xx^*)$ and assume that 
  \begin{equation} \label{eq:disjoint_assumption} 
     \text{supp}(\CC^T\CC \ee_j) \cap \text{supp}(\CC^T\CC \ee_k) = \emptyset \quad \text{for all} \ j, k \in \mathcal{J}, \ j \neq k.
    \end{equation}
    }
  Then $ \xx^* = \sum_{j \in \mathcal{J}} x_j^* \ee_j $ is the unique solution to the problem
 \begin{equation}
  \min_{\xx \in \mathbb{R}^n} \|\WW \xx\|_1 \quad \text{subject to} \quad \AAA \xx = \AAA \xx^*,
  \label{II-Ax =Ax*}
  \end{equation}
  provided that \eqref{eq:nonpar} holds.
\end{theorem}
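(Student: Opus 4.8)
\textbf{Proof proposal for Theorem \ref{thm:disjoint_supports}.}

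The plan is to construct an explicit dual certificate $\cc$ satisfying the hypotheses \eqref{cond_1}--\eqref{cond_2} of Theorem \ref{thm:support}, and then invoke that theorem to obtain both the optimality of $\xx^*$ and the support inclusion $\text{supp}(\yy) \subseteq \text{supp}(\xx^*)$ for any competing solution $\yy$. The uniqueness claim will then require one additional short argument to upgrade ``support inclusion'' to ``equality with $\xx^*$.''

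First I would try the natural candidate built from the columns $\CC\ee_j$ themselves. Since I need $\frac{\CC\ee_i}{\|\CC\ee_i\|}\cdot\cc = \text{sgn}(x_i^*)$ for each $i\in\mathcal{J}$, and the disjointness assumption \eqref{eq:disjoint_assumption} says the supports of the vectors $\CC^T\CC\ee_j$ (equivalently, the ``interaction patterns'' of distinct active columns) do not overlap, I expect a superposition of the form $\cc = \sum_{k\in\mathcal{J}} \frac{\text{sgn}(x_k^*)}{\|\CC\ee_k\|}\,\CC\ee_k$ to work. The key point is that $\frac{\CC\ee_i}{\|\CC\ee_i\|}\cdot\cc = \sum_{k\in\mathcal{J}} \frac{\text{sgn}(x_k^*)}{\|\CC\ee_i\|\,\|\CC\ee_k\|}(\CC\ee_i,\CC\ee_k)$, and $(\CC\ee_i,\CC\ee_k) = (\CC^T\CC\ee_i)_k$, which is the $k$-th component of $\CC^T\CC\ee_i$. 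For $i\in\mathcal{J}$ the disjointness of the supports of $\CC^T\CC\ee_i$ and $\CC^T\CC\ee_k$ (for $k\neq i$, $k\in\mathcal{J}$) forces all cross terms to vanish, leaving exactly the diagonal term $\frac{\text{sgn}(x_i^*)}{\|\CC\ee_i\|^2}(\CC\ee_i,\CC\ee_i) = \text{sgn}(x_i^*)$, which verifies \eqref{cond_1} on the nose.

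The main obstacle is establishing the strict inequality \eqref{cond_2} for $i\in\mathcal{J}^c$. For such $i$, the quantity $\frac{\CC\ee_i}{\|\CC\ee_i\|}\cdot\cc = \sum_{k\in\mathcal{J}} \frac{\text{sgn}(x_k^*)}{\|\CC\ee_i\|\,\|\CC\ee_k\|}(\CC^T\CC\ee_i)_k$ need not have vanishing cross terms, so I cannot rely on disjointness alone; I expect at most one of the terms to survive, or else I must bound the sum. Here I would again use \eqref{eq:disjoint_assumption}: if the index $i$ lies in $\text{supp}(\CC^T\CC\ee_k)$ for at most one $k\in\mathcal{J}$, the sum collapses to a single term $\frac{\text{sgn}(x_k^*)}{\|\CC\ee_i\|\,\|\CC\ee_k\|}(\CC\ee_i,\CC\ee_k)$, whose absolute value is $\left|\left(\frac{\CC\ee_i}{\|\CC\ee_i\|},\frac{\CC\ee_k}{\|\CC\ee_k\|}\right)\right|$, and this is strictly less than $1$ by Cauchy--Schwarz together with the non-parallelism assumption \eqref{eq:nonpar} (exactly as used in Lemma \ref{lemma}). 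The delicate case is an index $i$ belonging to two or more of the supports; I would need to argue that the disjointness hypothesis, read correctly, prevents this overlap across the active set, so that each off-support column interacts with at most one active column. Verifying that the disjointness of the supports of $\{\CC^T\CC\ee_k\}_{k\in\mathcal{J}}$ indeed yields this ``at most one active interaction'' structure for every off-support index is the crux of the argument.

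Finally, to promote optimality to uniqueness, I would combine the support inclusion $\text{supp}(\yy)\subseteq\mathcal{J}$ (granted by Theorem \ref{thm:support}) with the constraint $\AAA\yy = \AAA\xx^*$, equivalently $\CC\yy = \CC\xx^*$, restricted to the active coordinates. Writing $\yy = \sum_{k\in\mathcal{J}} y_k\ee_k$, the disjointness of the supports of the columns $\CC^T\CC\ee_k$ (hence a block-diagonal Gram structure on $\mathcal{J}$, or at least linear independence of $\{\CC\ee_k\}_{k\in\mathcal{J}}$ guaranteed by \eqref{eq:nonpar} and disjointness) forces $y_k = x_k^*$ for all $k\in\mathcal{J}$, giving $\yy = \xx^*$. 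This last step is routine once the Gram matrix of the active columns is shown to be diagonal, which is an immediate consequence of \eqref{eq:disjoint_assumption}.
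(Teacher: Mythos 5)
Your proposal is correct and follows essentially the same route as the paper's own proof: the same dual certificate $\cc=\sum_{k\in\mathcal{J}}\text{sgn}(x_k^*)\,\CC\ee_k/\|\CC\ee_k\|$, the same vanishing of cross terms for $i\in\mathcal{J}$ (using, as you do implicitly, that \eqref{eq:nonpar} puts $k$ itself in $\text{supp}(\CC^T\CC\ee_k)$, so disjointness kills $(\CC\ee_i,\CC\ee_k)$), the same at-most-one-surviving-term plus Cauchy--Schwarz argument for \eqref{cond_2}, and the same orthogonality/diagonal-Gram argument for uniqueness. The only remark worth making is that the step you call the ``crux''---that each $i\in\mathcal{J}^c$ meets at most one of the supports $\text{supp}(\CC^T\CC\ee_k)$, $k\in\mathcal{J}$---is immediate from \eqref{eq:disjoint_assumption}, exactly as the paper observes.
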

\begin{proof}
    \rem{We first show that \eqref{eq:disjoint_assumption} implies that 
    \begin{equation} \label{eq:orthogonality_assumption}
        (\CC \ee_j, \CC \ee_k) = 0  \quad \text{for all} \ j, k \in \mathcal{J}, \ j \neq k.
    \end{equation}
    For any $j \in \mathcal{J}$, assumption \eqref{eq:nonpar} yields that 
    \begin{equation*}
        (\CC^T \CC \ee_j, \ee_j) = \| C \ee_j \|_2^2 \neq 0 \Rightarrow j \in \text{supp}(\CC^T\CC \ee_j). 
    \end{equation*}
    Let $j,k \in \mathcal{J}$ be arbitrary. Then, $j \in \text{supp}(\CC^T\CC \ee_j)$ and it follows from \eqref{eq:disjoint_assumption} that 
    \begin{equation*}
        k \notin \text{supp}(\CC^T\CC \ee_j) \Rightarrow (\CC^T\CC \ee_j, \ee_k) = 0 \Rightarrow (\CC \ee_j, \CC \ee_k) = 0.
    \end{equation*}
    %and we conclude that the vectors $\{\CC\ee_j\}_{j\in \mathcal{J}}$ are orthogonal.
    }
    
    \rem{The rest of the proof, including the use of \eqref{eq:orthogonality_assumption}, is rather similar to the argument for Theorem 4.2 in \cite{Elv24}, see Appendix \ref{proof_thm:disjoint_supports} for further details}.  
\end{proof}

\rem{Provided that $\{\AAA\ee_j\}_{j\in \mathcal{J}}$ is a linearly independent set, we will now briefly explain that one can always construct a matrix $\BB$ such that $C=\BB \AAA$ satisfies the orthogonality property \eqref{eq:orthogonality_assumption} needed in the proof of Theorem \ref{thm:disjoint_supports}. Note that we have not succeeded in designing $\BB$ such that the disjoint support assumption \eqref{eq:disjoint_assumption} holds, only that its consequence \eqref{eq:orthogonality_assumption} is fulfilled.}  

\rem{Let us introduce the notation}
\begin{align}
    \nonumber
    \AAA &= [\aaa_1 \; \aaa_2 \; \ldots \; \aaa_n] \in \mathbb{R}^{m \times n}, \\
    \nonumber
    \mathcal{J} &= \{ j_1, \, j_2, \ldots, j_s \},  \\
    \label{def_Y}
    \YY &= [\aaa_{j_1} \; \aaa_{j_2} \; \ldots \; \aaa_{j_s}] \in \mathbb{R}^{m \times s}. 
\end{align}

\begin{proposition} \label{prop:new_B}
    Assume that $\{\AAA\ee_j\}_{j\in \mathcal{J}} = \{ \aaa_{j_1}, \, \aaa_{j_2}, \ldots, \, \aaa_{j_s}\}$ is a linearly independent set and that $s \leq m \leq n$. Then $\CC = \YY^\dagger \AAA$ satisfies \rem{\eqref{eq:orthogonality_assumption}}, where $\YY$ is defined in \eqref{def_Y}. 
\end{proposition}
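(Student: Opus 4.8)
The plan is to show that the columns of $\CC$ indexed by $\mathcal{J}$ are in fact orthonormal, which is strictly stronger than the required pairwise orthogonality \eqref{eq:orthogonality_assumption}. The whole argument rests on a single standard property of the Moore--Penrose pseudoinverse of a matrix with full column rank.

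First I would rewrite $\CC\ee_i$ in terms of the columns of $\AAA$. Since $\aaa_i = \AAA\ee_i$, we have $\CC\ee_i = \YY^\dagger\AAA\ee_i = \YY^\dagger\aaa_i$. The key observation is that for an index $j_l \in \mathcal{J}$, the vector $\aaa_{j_l}$ is precisely the $l$-th column of $\YY$; that is, $\aaa_{j_l} = \YY\hat{\ee}_l$, where $\hat{\ee}_l \in \mathbb{R}^s$ denotes the $l$-th standard basis vector of $\mathbb{R}^s$.

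Next I would invoke the linear independence hypothesis. Because $\{\aaa_{j_1}, \ldots, \aaa_{j_s}\}$ is linearly independent and $s \leq m$, the matrix $\YY \in \mathbb{R}^{m \times s}$ has full column rank $s$. For such a matrix $\YY^T\YY$ is invertible, $\YY^\dagger = (\YY^T\YY)^{-1}\YY^T$, and hence $\YY^\dagger\YY = \II_s$, the $s \times s$ identity. Combining this with the previous step gives
\begin{equation*}
  \CC\ee_{j_l} = \YY^\dagger\aaa_{j_l} = \YY^\dagger\YY\hat{\ee}_l = \hat{\ee}_l, \qquad l = 1, 2, \ldots, s,
\end{equation*}
so the images $\CC\ee_{j_1}, \ldots, \CC\ee_{j_s}$ are exactly the standard basis vectors of $\mathbb{R}^s$.

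Finally, for any $j = j_l$ and $k = j_{l'}$ in $\mathcal{J}$ with $j \neq k$ (so $l \neq l'$), orthonormality of the standard basis yields $(\CC\ee_j, \CC\ee_k) = (\hat{\ee}_l, \hat{\ee}_{l'}) = 0$, which is precisely \eqref{eq:orthogonality_assumption}. I do not expect a genuine obstacle here: the proof is a direct computation once one recognizes that the columns of $\YY$ are the very vectors $\AAA\ee_{j_l}$ and recalls the identity $\YY^\dagger\YY = \II_s$ valid for full-column-rank $\YY$. The only points worth checking explicitly are that the hypotheses (independence of $\{\aaa_{j_l}\}$ together with $s \leq m$) indeed force full column rank, so that $\YY^T\YY$ is invertible, and, as a sanity check, that $\CC\ee_{j_l} = \hat{\ee}_l \neq \OO$, so that the construction is consistent with the non-parallel requirement \eqref{eq:nonpar} on the relevant basis vectors.
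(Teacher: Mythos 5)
Your proof is correct and follows essentially the same route as the paper's: both identify $\AAA\ee_{j_l}$ with the $l$-th column $\YY\hat{\ee}_l$ of $\YY$, use the identity $\YY^\dagger\YY = \II_s$ (which the paper justifies via the projection interpretation of $\YY^\dagger\YY$ onto $\nullspace{\YY}^\perp$, while you use the full-column-rank formula $\YY^\dagger = (\YY^T\YY)^{-1}\YY^T$ --- an equivalent justification), and conclude that the images $\CC\ee_{j_l} = \hat{\ee}_l$ are standard basis vectors, hence orthogonal.
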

\begin{proof}
    Since $\{ \aaa_{j_1}, \, \aaa_{j_2}, \ldots, \, \aaa_{j_s}\}$ are linearly independent, the null space $\nullspace{\YY}$ of $\YY$ only contains the zero element. Therefore, $\YY^\dagger \YY$ equals the identity because it yields the orthogonal projection onto the orthogonal complement $\nullspace{\YY}^\perp$ of $\nullspace{\YY}$, i.e., 
    \begin{equation*}
        \YY^\dagger \YY \hat{\ee}_k = \hat{\ee}_k \quad \mbox{for } k=1,2, \ldots, s,  
    \end{equation*}
    where $\hat{\ee}_{j_k} \in \mathbb{R}^s$ denotes the standard unit basis vector containing only zero components, except for the $k$'th component which equals $1$. From the definition of $\YY$ we find that 
    \begin{equation*}
        \AAA \ee_{j_k} = \aaa_{j_k} = \YY \hat{\ee}_k \quad \mbox{for } k=1,2, \ldots, s,  
    \end{equation*}
    and hence 
    \begin{equation} \label{eq:simple_forward_images}
        \YY^\dagger \AAA \ee_{j_k} = \YY^\dagger \YY \hat{\ee}_k = \hat{\ee}_k \quad \mbox{for } k=1,2, \ldots, s. 
    \end{equation}
    This shows that $\{ \YY^\dagger \AAA \ee_{j_k} \}_{k=1}^s$ is a set of orthogonal vectors.  
\end{proof}
We will use $B=\YY^\dagger$ in some of the numerical experiments presented below and discuss why this approach might be beneficial for the task of identifying several sources and sinks, using boundary data, for the model problem studied in Section \ref{weighting}. We also note that condition \rem{\eqref{eq:disjoint_assumption}} in Theorem \ref{thm:disjoint_supports}, due to \eqref{eq:simple_forward_images}, now can be written in the form $$\text{supp}(\CC^T \hat{\ee}_j) \cap \text{supp}(\CC^T \hat{\ee}_k) = \emptyset \quad \text{for all} \ j, k \in \mathcal{J}, \ j \neq k,$$ when $C=\YY^\dagger \AAA$. It is thus sufficient to check whether the $j$'th and $k$'th rows of $\CC$ have disjoint supports for all $j, k \in \mathcal{J}, \ j \neq k$, which is easy to do with a computer. 

Compared with Theorem \ref{thm:disjoint_supports}, the next result concerns the near diametrically opposite case, namely when the images $\{\CC\ee_j\}_\JJ$ of a collection of sources $\{\ee_j\}_\JJ$ are almost parallel. Recall that Theorem \ref{thm:support} provides two sufficient conditions for the recovery of $\xx^* = \textstyle\sum_{\mathcal{J}}x_{j}^*\ee_{j}$.  
These conditions rely on the existence of a dual vector $\cc$. If $x_{j}^* > 0 \ \forall j \in \JJ$ and we have the idealized case that all pairwise inner products of their normalized images under $\CC$ are equal to some constant $\hat{\rho} \in (0,1)$, i.e., 
\begin{equation} \label{eq:parallel_images}
\left(\frac{\CC\ee_j}{\|\CC\ee_j\|},\frac{\CC\ee_{j'}}{\CC\ee_{j'}\|}\right) = \hat{\rho}, \quad j, j' \in \JJ, j \neq j',
\end{equation} 
we will see that condition \eqref{cond_1} holds if there exists a vector $\zz \geq \mathbf{0}$ such that $\QQ\zz = \mathbf{1}$, where the matrix $\QQ = \QQ(\hat{\rho})$ is specified in Lemma \ref{lemma:parallel} below. With some mild additional conditions, it also turns out that \eqref{cond_2} will be satisfied. Consequently, Theorem \ref{thm:support} can be applied. 

If the inner products of the normalized images under $\CC$ are not exactly identical, i.e., \eqref{eq:parallel_images} is only approximately satisfied, then the analysis becomes more technical: We can use relatively standard perturbation theory for matrices to obtain bounds on how much deviation from $\hat{\rho}$ in \eqref{eq:parallel_images} which is tolerable and still prove that conditions \eqref{cond_1} and \eqref{cond_2} hold. We present these considerations in the form of a lemma and a theorem.

\begin{lemma}\label{lemma:parallel}
    Let $\QQ = \QQ(\rho) \in \mathbb{R}^{s \times s}$, $s>1$, be symmetric with entries  
    \begin{equation*}
        q_{ij} = \begin{cases}
            1, &i = j, \\ 
            \rho, &i \neq j,
        \end{cases}
    \end{equation*}
    where $0 < \rho < 1$. Then,
    \begin{enumerate}[(i)]
      \item the vector $\yy$ with all components equal to $$y_i = \frac{1}{1+(s-1)\rho}$$ solves $\QQ\xx = \mathbf{1}$ uniquely.
      \item if a matrix $\RR \in \mathbb{R}^{s \times s}$ obeys the bound
      \begin{equation*}
        \|\RR\|_\infty \leq \frac{(1-\rho)(\rho(s-1)+1)}{2\rho(2s-3)+2},
      \end{equation*}
      then the unique solution $\bar{\xx}$ of $$(\QQ + \RR)\xx = \mathbf{1}$$ only has non-negative components.
    \end{enumerate}
\begin{proof} \mbox{ } 
    \begin{enumerate}[(i)] 
    \item  Notice that each row of the matrix $\QQ$ sums to $1 + (s-1)\rho$. We can   
        therefore conclude that the vector $\yy$ given by
        \begin{equation}\label{eq:yi}
                y_i = \frac{1}{1+(s-1)\rho} > 0, \quad i \in \{1, \ldots, s\}, 
        \end{equation}
        solves $\QQ\xx = \mathbf{1}.$ 

        We next derive that the inverse $\QQ^{-1}$ of $\QQ$ exists and has the same structure as $\QQ$. This can be verified in a straightforward manner by solving the $2 \times 2$ system, derived from the condition that $\QQ^{-1} \QQ = \II$, 
        \begin{equation} \label{eq:2x2system}
            \begin{cases}
                \ d + (s-1)\rho\zeta &= 1, \\
                \rho d + (s-2)\rho\zeta + \zeta &= 0,
            \end{cases}
        \end{equation}
        where $d$ and $\zeta$ are the diagonal and off-diagonal entries of $\QQ^{-1}$, respectively. This shows that $\yy$ is the unique solution of $\QQ\xx = \mathbf{1}$.
        
    \item From \cite[Theorem 8.9]{lyche00}, we have that, if $r := \|\QQ^{-1}\|_\infty\|\RR\|_\infty < 1$, then there exists a unique solution $\bar{\xx}$ of 
    $$(\QQ + \RR)\xx = \mathbf{1},$$
    obeying the bound
    \begin{equation} \label{eq:Lyches_book}
        \frac{\|\yy - \bar{\xx}\|_\infty}{\|\yy\|_\infty} \leq \frac{r}{1 - r}.
    \end{equation}
    Recall the expression \eqref{eq:yi} for the components of $\yy$, which are all identical and positive. If one of the components of $\bar{\xx}$ is negative, say $\bar{x}_i < 0$, then 
    \begin{align*}
        \frac{\|\yy - \bar{\xx}\|_\infty}{\|\yy\|_\infty} &\geq \frac{|y_i - \bar{x}_i|}{|y_i|} \\
        & > \frac{|y_i|}{|y_i|} \\
        &= 1.
    \end{align*}
    Hence, from inequality \eqref{eq:Lyches_book} we can conclude that all the components of $\bar{\xx}$ are non-negative provided that
    \begin{equation*}
         \frac{r}{1 - r} = \frac{\|\QQ^{-1}\|_\infty\|\RR\|_\infty}{1 - \|\QQ^{-1}\|_\infty\|\RR\|_\infty} \leq 1,
    \end{equation*}
    which holds whenever 
    \begin{equation}\label{eq:RRbound}
        \|\RR\|_\infty \leq \frac{1}{2\|\QQ^{-1}\|_\infty}.
    \end{equation}

    By solving the $2 \times 2$ system \eqref{eq:2x2system} for $d$ and $\zeta$, and observing that summing the absolute values of the entries of any row of $\QQ^{-1}$ gives the same number, we compute the matrix norm
    \begin{equation*}
        \|\QQ^{-1}\|_\infty = \frac{\rho(2s-3) + 1}{(1-\rho)(\rho(s-1)+1)}.
    \end{equation*}
    Consequently, by inserting this into \eqref{eq:RRbound} we get the bound
    \begin{equation*}
        \|\RR\|_\infty \leq \frac{(1-\rho)(\rho(s-1)+1)}{2\rho(2s-3)+2}.
    \end{equation*}
    This completes the proof.
    \end{enumerate}
\end{proof}
\end{lemma}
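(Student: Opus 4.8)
The plan is to handle the two parts separately, exploiting throughout that $\QQ$ is a rank-one update of a scaled identity, namely $\QQ = (1-\rho)\II + \rho\,\mathbf{1}\mathbf{1}^T$. For part (i), I would first observe that every row of $\QQ$ sums to $1+(s-1)\rho$, equivalently that $\mathbf{1}$ is an eigenvector of $\QQ$ with eigenvalue $1+(s-1)\rho$; dividing by this eigenvalue immediately verifies that the stated $\yy$ satisfies $\QQ\yy = \mathbf{1}$. For uniqueness it then remains to show that $\QQ$ is nonsingular, which I would obtain by identifying the full spectrum: any vector orthogonal to $\mathbf{1}$ is an eigenvector with eigenvalue $1-\rho$, so the eigenvalues are $1-\rho$ (multiplicity $s-1$) and $1+(s-1)\rho$ (simple), both strictly positive when $0<\rho<1$. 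Hence $\QQ$ is positive definite, invertible, and the solution is unique. Alternatively, one can posit that $\QQ^{-1}$ inherits the constant-diagonal, constant-off-diagonal structure of $\QQ$ and solve the resulting $2\times 2$ linear system for its entries, which both proves invertibility and prepares the explicit formula needed later.

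For part (ii), the strategy is a perturbation/stability argument. I would invoke the standard bound that if $r := \|\QQ^{-1}\|_\infty\|\RR\|_\infty < 1$, then $\QQ+\RR$ is invertible with a unique solution $\bar\xx$ of $(\QQ+\RR)\xx = \mathbf{1}$ obeying $\|\yy-\bar\xx\|_\infty/\|\yy\|_\infty \le r/(1-r)$. The decisive observation is that all components of $\yy$ are equal and positive; consequently, if some component $\bar x_i$ were negative, then $|y_i-\bar x_i| > |y_i| = \|\yy\|_\infty$, which forces the relative error to exceed $1$. Contrapositively, it suffices to guarantee $r/(1-r)\le 1$, i.e.\ $r\le 1/2$, equivalently $\|\RR\|_\infty \le 1/(2\|\QQ^{-1}\|_\infty)$, in order to conclude that $\bar\xx$ has no negative entry.

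The main obstacle --- and the only genuinely computational step --- is to evaluate $\|\QQ^{-1}\|_\infty$ in closed form. Using either Sherman--Morrison or the $2\times 2$ system from part (i), I would extract the diagonal entry $d$ and off-diagonal entry $\zeta$ of $\QQ^{-1}$ and check the crucial sign pattern $d>0$, $\zeta<0$ valid for $0<\rho<1$; this sign pattern is what makes the maximal row-sum of absolute values collapse cleanly to $\|\QQ^{-1}\|_\infty = |d|+(s-1)|\zeta| = (\rho(2s-3)+1)/\big((1-\rho)(\rho(s-1)+1)\big)$. Substituting this into $\|\RR\|_\infty \le 1/(2\|\QQ^{-1}\|_\infty)$ then yields the advertised bound. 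I expect the bookkeeping in this algebra, rather than any conceptual difficulty, to be the most error-prone part, since the genuine content lies in the sign argument of part (ii).
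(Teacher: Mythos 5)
Your proposal is correct, and for part (ii) --- the heart of the lemma --- it coincides step by step with the paper's own argument: the same perturbation bound $\|\yy-\bar{\xx}\|_\infty/\|\yy\|_\infty \le r/(1-r)$ with $r = \|\QQ^{-1}\|_\infty\|\RR\|_\infty$, the same observation that a negative component of $\bar{\xx}$ would force the relative error above $1$ because the components of $\yy$ are all equal and positive, the same reduction to $r \le 1/2$, and the same closed-form value $\|\QQ^{-1}\|_\infty = \bigl(\rho(2s-3)+1\bigr)/\bigl((1-\rho)(\rho(s-1)+1)\bigr)$. Where you genuinely diverge is in part (i) and in how $\QQ^{-1}$ is obtained: you exploit the decomposition $\QQ = (1-\rho)\II + \rho\,\mathbf{1}\mathbf{1}^T$, read off the spectrum ($1+(s-1)\rho$ simple, $1-\rho$ with multiplicity $s-1$), and conclude positive definiteness, whereas the paper posits that $\QQ^{-1}$ has the same constant-diagonal/constant-off-diagonal structure as $\QQ$ and solves a $2\times 2$ system for its entries $d$ and $\zeta$. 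Your route buys two things: invertibility and uniqueness come for free from positive definiteness, without having to justify that the structural ansatz for $\QQ^{-1}$ is consistent; and Sherman--Morrison hands you the signed entries $d>0$, $\zeta<0$ explicitly, which makes the evaluation of the $\infty$-norm as $|d|+(s-1)|\zeta|$ fully rigorous --- a sign-pattern point the paper glosses over when it says the row sums of absolute values agree. The paper's route, in exchange, stays entirely elementary (no eigenvalue or rank-one-update machinery) and reuses the same $2\times 2$ system in both the uniqueness claim and the norm computation.
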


We mentioned before Lemma \ref{lemma:parallel} that our strategy will be to find a dual vector $\cc$ such that \eqref{cond_1} and \eqref{cond_2} are satisfied. Inspired by the choice of $\cc$ in the proof of Theorem \ref{thm:disjoint_supports}, see Appendix \ref{proof_thm:disjoint_supports}, we will employ a dual certificate in the form  
\begin{equation*}
    \cc = \sum_{\JJ} z_j \frac{\CC\ee_j}{\|\CC\ee_j\|}.
\end{equation*}
Here, $\zz = [z_1 \ z_2 \ \ldots \ z_s]^T$ is determined by solving the linear system, cf. \eqref{cond_1}, 
\begin{equation*}
    \left(\frac{\CC\ee_i}{\|\CC\ee_i\|}, \cc\right) = 1 \quad \forall i \in \JJ
\end{equation*}
or 
\begin{equation} \label{eq:lin_syst_dual_certificate}
    \sum_{\JJ} z_j \left(\frac{\CC\ee_i}{\|\CC\ee_i\|}, \frac{\CC\ee_j}{\|\CC\ee_j\|}  \right) = 1 \quad \forall i \in \JJ,
\end{equation}
assuming that the true source $\xx^*=\sum_{\JJ} x_j^*\ee_j$ only has positive components. Note that when \eqref{eq:parallel_images} holds, \eqref{eq:lin_syst_dual_certificate} becomes the linear system $\QQ \xx = \mathbf{1}$, with $\rho=\hat{\rho}$, studied in Lemma \ref{lemma:parallel}(i), whereas for the case when \eqref{eq:parallel_images} only is approximately satisfied, we get the problem $(\QQ + \RR)\xx = \mathbf{1}$ explored in Lemma \ref{lemma:parallel}(ii). Furthermore, the main diagonals of $\QQ$ and $\QQ+\RR$ only contain ones, e.g., the main diagonal of $\RR$ consists of zeros. The details are presented in the following theorem.
\begin{theorem} \label{thm:almost_same_images}
Let $\xx^* = \textstyle\sum_{\mathcal{J}}x_{j}^*\ee_{j}$, where we assume that $x_j^*$ has the same sign for all $j \in \JJ$. We introduce the notation
\begin{equation} \label{def:g_ij}
g_{ij} = \left(\frac{\CC\ee_i}{\|\CC\ee_i\|},\frac{\CC\ee_{j}}{\|\CC\ee_{j}\|}\right),
\end{equation} 
and assume that
\begin{equation} \label{eq:most_parallel}
    g_{jj'} > |g_{ij}|, \quad \forall j, j' \in \JJ \textnormal{ and } \forall i \in \JJ^c.
\end{equation}
Furthermore, define the $s \times s$ matrix $\RR(\rho)$ by 
\begin{equation} \label{def:R_rho}
\RR(\rho)_{kl} = 
    \begin{cases} 0, &k = l, \\
    g_{j_kj_l} - \rho, &k \neq l,
    \end{cases}
\end{equation}
\rem{for $k,l \in \{ 1,2,\ldots,s \}$, i.e., $j_k,j_l \in \JJ$.}
If there exists $\bar{\rho} \in (0,1)$ such that 
\begin{equation} \label{eq:upper_bound_assumption}
    \|\RR(\bar{\rho})\|_\infty \leq \frac{(1-\bar{\rho})(\bar{\rho}(s-1)+1)}{2\bar{\rho}(2s-3)+2}
\end{equation} 
then $\xx^*$ is a solution of the basis pursuit problem
\begin{equation*}
  \min_\xx \|\WW \xx\|_1\quad \text{subject to} \quad \AAA \xx = \AAA \xx^*. 
\end{equation*}
\end{theorem}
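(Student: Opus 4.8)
The plan is to exhibit an explicit dual certificate $\cc$ satisfying the two conditions \eqref{cond_1}--\eqref{cond_2} of Theorem \ref{thm:support}; once such a $\cc$ is produced, Theorem \ref{thm:support} immediately yields that $\xx^*$ solves the basis pursuit problem. Since the components $x_j^*$ all share the same sign, I would first reduce to the case $x_j^* > 0$ for every $j \in \JJ$: if the common sign is negative, one simply replaces $\cc$ by $-\cc$ below, which flips \eqref{cond_1} to match $\mathrm{sgn}(x_j^*) = -1$ and leaves the absolute-value condition \eqref{cond_2} untouched. Following the ansatz stated just before the theorem, I take
\[
\cc = \sum_{j \in \JJ} z_j \frac{\CC\ee_j}{\|\CC\ee_j\|},
\]
so the scalars $\zz = [z_1\ \ldots\ z_s]^T$ are the quantities to be determined.

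Imposing \eqref{cond_1}, namely $\left(\frac{\CC\ee_i}{\|\CC\ee_i\|}, \cc\right) = 1$ for all $i \in \JJ$, produces exactly the linear system \eqref{eq:lin_syst_dual_certificate}, whose coefficient matrix is the $s \times s$ Gram matrix $G$ with entries $g_{ij}$ from \eqref{def:g_ij}, restricted to $i,j \in \JJ$. This matrix has unit diagonal, and comparing \eqref{def:R_rho} with the definition of $\QQ(\rho)$ in Lemma \ref{lemma:parallel} shows that $G = \QQ(\bar{\rho}) + \RR(\bar{\rho})$ for the value $\bar{\rho}$ supplied by the hypothesis. Since the bound \eqref{eq:upper_bound_assumption} is precisely the hypothesis of Lemma \ref{lemma:parallel}(ii), that lemma guarantees that $G\zz = \mathbf{1}$ has a unique solution and, crucially, that every component $z_j$ is non-negative. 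With this choice of $\zz$, condition \eqref{cond_1} holds by the very construction of the system.

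It remains to verify \eqref{cond_2}, which is where the dominance hypothesis \eqref{eq:most_parallel} enters and which I expect to be the main point of the argument. Fixing $i \in \JJ^c$ and an arbitrary reference index $i_0 \in \JJ$, and using $z_j \geq 0$, I would estimate
\[
\left|\frac{\CC\ee_i}{\|\CC\ee_i\|}\cdot \cc\right| = \left|\sum_{j \in \JJ} z_j g_{ij}\right| \leq \sum_{j \in \JJ} z_j |g_{ij}| < \sum_{j \in \JJ} z_j g_{i_0 j} = 1,
\]
where the final equality is just \eqref{cond_1} read at index $i_0$. The strict middle inequality is the crux: for each summation index $j \in \JJ$, applying \eqref{eq:most_parallel} together with the symmetry $g_{i_0 j} = g_{j i_0}$ gives $|g_{ij}| < g_{i_0 j}$, hence $z_j |g_{ij}| \leq z_j g_{i_0 j}$ termwise, and strictly whenever $z_j > 0$. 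Because $\sum_{j} z_j g_{i_0 j} = 1 > 0$ forces at least one $z_j > 0$ (with the corresponding $g_{i_0 j} > 0$ by \eqref{eq:most_parallel}), the summed inequality is strict, which establishes \eqref{cond_2}.

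Having produced a $\cc$ meeting both \eqref{cond_1} and \eqref{cond_2}, Theorem \ref{thm:support} closes the argument. The only genuinely delicate steps I anticipate are the termwise comparison in the display above, where \eqref{eq:most_parallel} must be invoked in the correct index orientation and the degenerate case $\zz = \mathbf{0}$ ruled out, and the bookkeeping ensuring that the non-negativity delivered by Lemma \ref{lemma:parallel}(ii) aligns with the sign reduction on the $x_j^*$ so that \eqref{cond_1} indeed reproduces $\mathrm{sgn}(x_i^*)$.
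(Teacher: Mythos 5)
Your proposal is correct and follows essentially the same route as the paper's own proof: the same dual certificate ansatz $\cc = \sum_{j \in \JJ} z_j \CC\ee_j/\|\CC\ee_j\|$, the same reduction of \eqref{cond_1} to the system $(\QQ(\bar{\rho})+\RR(\bar{\rho}))\zz = \mathbf{1}$ handled by Lemma \ref{lemma:parallel}(ii), and the same verification of \eqref{cond_2} via the dominance condition \eqref{eq:most_parallel} and $z_j \geq 0$, concluding with Theorem \ref{thm:support}. If anything, you are slightly more careful than the paper, since you explicitly rule out the degenerate case $\zz = \mathbf{0}$ to justify strictness of the key inequality, a point the paper's proof leaves implicit.
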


Before we prove this result, we remark that: Roughly speaking, condition \eqref{eq:most_parallel} asserts that the members of $\{ \CC \ee_j/\| \CC \ee_j \| \}_\JJ$ are "more parallel/aligned" with other members of this set than with the members of  $\{ \CC \ee_j/\| \CC \ee_j \| \}_{\JJ^c}$. Also, if \eqref{eq:parallel_images} holds, then \eqref{eq:upper_bound_assumption} is satisfied with $\bar{\rho}=\hat{\rho}$ because in this case $R(\hat{\rho})$ becomes the zero matrix, i.e., a matrix only containing zeros. When \eqref{eq:parallel_images} only is approximately fulfilled, we can still prove that $\xx^*$ solves the basis pursuit problem, provided that \eqref{eq:upper_bound_assumption} holds. In this case $\bar{\rho} \in (0,1)$ is a suitable number such that 
\begin{equation*} 
\left(\frac{\CC\ee_j}{\|\CC\ee_j\|},\frac{\CC\ee_{j'}}{\CC\ee_{j'}\|}\right) \approx \bar{\rho}, \quad j, j' \in \JJ, j \neq j',
\end{equation*} 

\begin{proof}
    We assume that $x_j^* > 0, \forall j \in \JJ$. The proof is analogous for the negative case. Define the vector $\cc$ by
    \begin{equation*}
        \cc = \sum_{j \in \JJ} z_j \frac{\CC\ee_j}{\|\CC\ee_j\|},
    \end{equation*}
    and let $\QQ(\rho) \in \mathbb{R}^{s \times s}$ be as defined in Lemma \ref{lemma:parallel}. Note that the condition \eqref{cond_1} in the current setup reads
    \begin{equation} \label{eq:modified_cond_1}
        (\QQ(\bar{\rho}) + \RR(\bar{\rho}))\zz = \mathbf{1},
    \end{equation}
    cf. the definition of $\QQ(\rho)$ in Lemma \ref{lemma:parallel}, the definition \eqref{def:R_rho} of $\RR(\rho)$ and the definition \eqref{def:g_ij} of $g_{ij}$. 
    From Lemma \ref{lemma:parallel} and the upper bound assumption \eqref{eq:upper_bound_assumption} on $\|\RR(\bar{\rho})\|_\infty$, we have that there exists a unique solution $\zz$ to \eqref{eq:modified_cond_1} for which all components are non-negative, i.e, $z_j \geq 0$ for all $j \in \JJ$. 

    To conclude, we observe that also \eqref{cond_2} is satisfied since, for any $i \in \JJ^c$ and for any $j' \in \JJ$,
    \begin{equation*}
        \sum_{j\in\JJ} \left|\left(\frac{\CC\ee_i}{\|\CC\ee_i\|}, z_j \frac{\CC\ee_j}{\|\CC\ee_j\|}\right)\right| 
        < \sum_{j\in\JJ} z_j\left(\frac{\CC\ee_{j'}}{\|\CC\ee_{j'}\|},  \frac{\CC\ee_j}{\|\CC\ee_j\|}\right) = 1,
    \end{equation*}
    where we used the assumption that $g_{jj'} > |g_{ij}|$ and the fact that $z_j \geq 0$ to obtain the inequality. The equality follows from \eqref{eq:modified_cond_1} because the entries in row number $j'$ of $(\QQ(\bar{\rho})+\RR(\bar{\rho}))$ have the form $\left(\frac{\CC\ee_{j'}}{\|\CC\ee_{j'}\|},  \frac{\CC\ee_j}{\|\CC\ee_j\|}\right)$, see \eqref{def:R_rho}, \eqref{def:g_ij} and the definition of $\QQ(\rho)$ in Lemma \ref{lemma:parallel}.
    %observing that the sum if equal to component $j'$ in the vector $(\QQ(\bar{\rho})+\RR(\bar{\rho}))\zz$, which is equal to one.  
\end{proof}

\section{Numerical experiments}
\label{sec:numerical_experiments}
In this section, we visualize the effects of applying different choices of $\BB$ and discuss some computational results in view of the analysis presented in Section \ref{sec:analysis}. 
We performed all the simulations on a uniform grid and employed the finite element method (FEM) to discretize the elliptic operator involved in the boundary value problem \eqref{eq: bdry_val_prob}, using first-order Lagrange elements. Note that the discretization of the forward operator $f \mapsto u|_{\partial\Omega}$ yields a (forward) matrix $\AAA \in \Rmn$ in the form
\begin{equation*}
    \mathbf{\AAA} = \mathbf{M}_{\partial}^{1/2} \mathbf{L}^{-1} \mathbf{M},
\end{equation*}
where $\mathbf{L}$ is the matrix associated with $-\Delta u + \epsilon u$, and $\mathbf{M}$ and $\mathbf{M}_\partial$ are the standard and boundary mass matrices, respectively, see, e.g., \cite{Elv21c} for further details. \normalem
\emph{It is important to keep in mind that we try to recover internal sources using boundary data only. That is, $\AAA$ has a large null space.} 

All element matrices were generated using the FEniCSx software \cite{logg2012automated}, and no noise was added to the generated (synthetic) data, except for the data used to produce the results presented in Figure \ref{helm}.  Inverse crimes were avoided by generating the forward data on a $128 \times 128$ grid, whereas a $64 \times 64$ mesh was used to solve the inverse problems. However, to be in perfect alignment with the theory, an exception was made for the simulations displayed in Figure \ref{coarser_multi}: A coarse grid of size $16 \times 16$ was applied for both the forward and inverse computations. \\

All figures below display the solution to the optimization problem
\begin{equation*}
    \min_{\xx } \left \{\frac{1}{2}\|\CC\xx - \BB \yy \|_2^2 + \alpha \|\WW\xx\|_1 \right \}, 
\end{equation*}
where the matrices $\CC = \BB\AAA$ and $\WW$ are defined in Section \ref{weighting}, and $\BB$ is either 
 \begin{itemize}
    \item $\II$ - the identity matrix,
    \item $\AAK^\dagger$ - the truncated pseudo-inverse of $\AAA$ employing $k = 100$ singular values for the noise free cases and $k = 10$ for the case with added noise,    
    \item $\BB_{r}$ - a random sparse matrix, or 
    \item $\YY^{\dagger}$ - submatrix of $\AAA$ formed by selecting certain subcolumns. 
\end{itemize}
If not stated otherwise, the regularization parameter was $\alpha = 10^{-4}$.

\subsection{Single and multiple composite sources}
Theorem \ref{thm:varThm} guarantees that a source represented by a single basis vector can be recovered, albeit with a slightly smaller magnitude. In this first example, we deviate somewhat from this scenario and rather consider a source represented by several basis vectors which are spatial neighbours, i.e., a composite source, cf. panel (a) in Figure \ref{one_source}. Panels (b) - (d) show the inverse solutions computed with different choices of $\BB$. We observe that, for all the choices of $\BB$, the support of the inverse solutions are located inside the support of the true source. The sparsity-promoting feature of the $\ell^{1}$-norm might explain the smaller support and the larger magnitude of the inverse solutions, compared with the true source.

   \begin{figure}[H]
    \centering
    \begin{subfigure}[b]{0.45\linewidth}        %% or \columnwidth
        \centering
        \includegraphics[width=\linewidth]{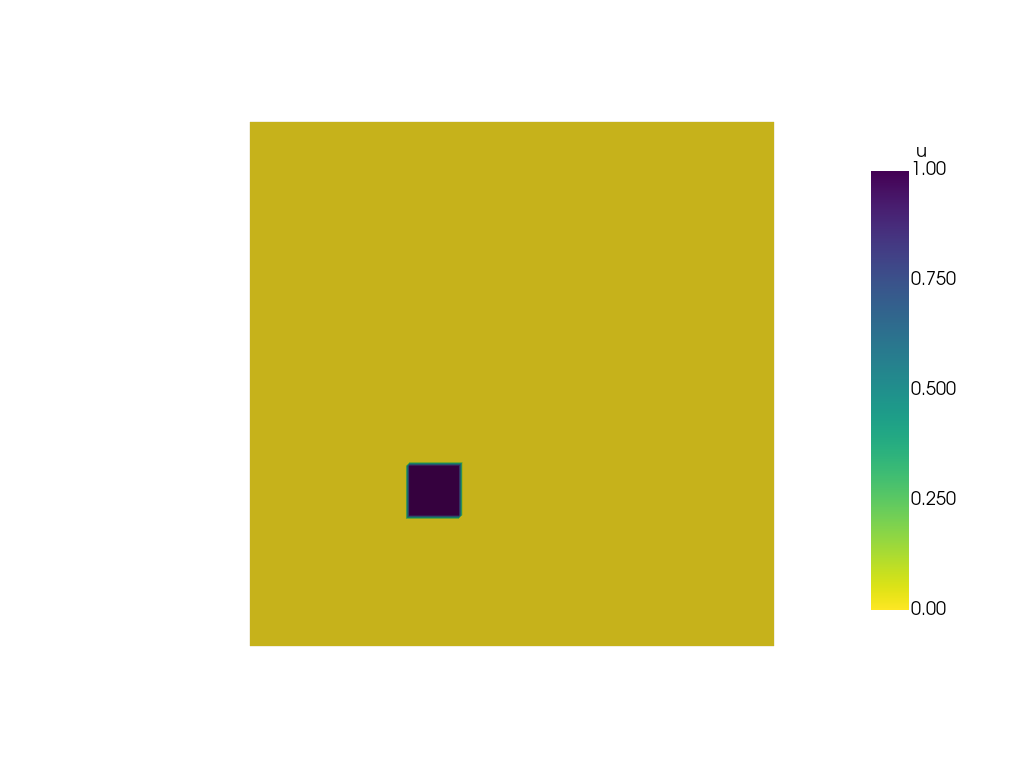}
        \vspace{-3em}
        \label{true_single}
        \caption{True source}
    \end{subfigure}
    \begin{subfigure}[b]{0.45\linewidth}        %% or \columnwidth
        \centering
        \includegraphics[width=\linewidth]{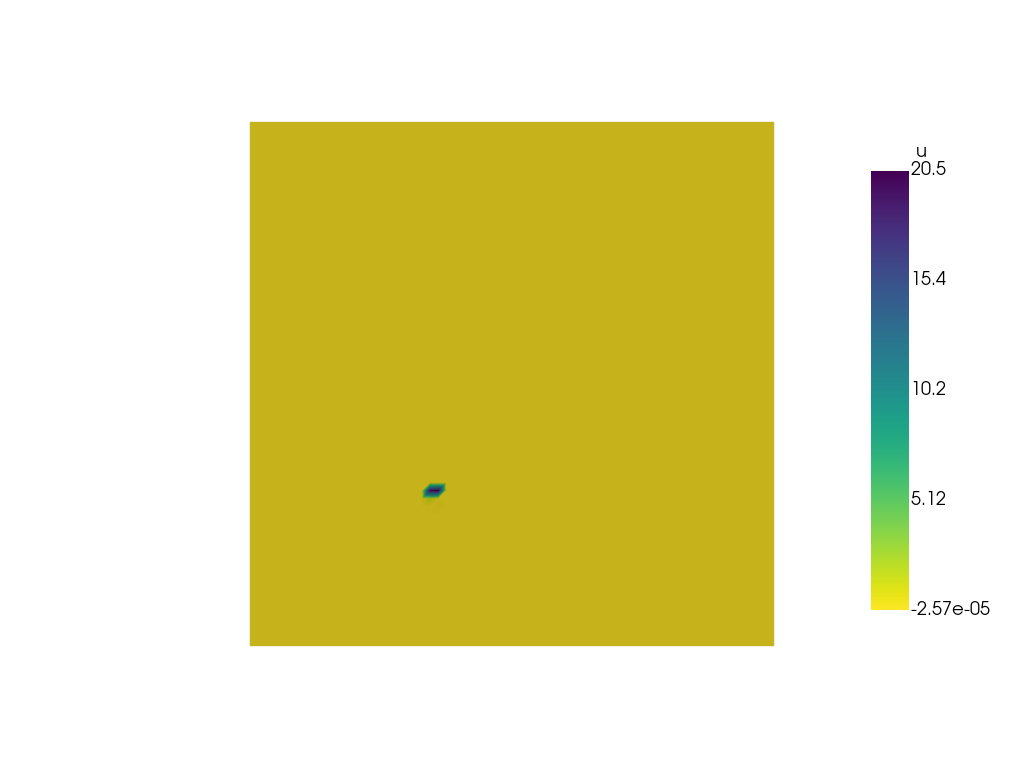}
        \vspace{-3em}
        \caption{$\BB = \II$}
    \end{subfigure}
    \begin{subfigure}[b]{0.45\linewidth}        %% or \columnwidth
        \centering
        \includegraphics[width=\linewidth]{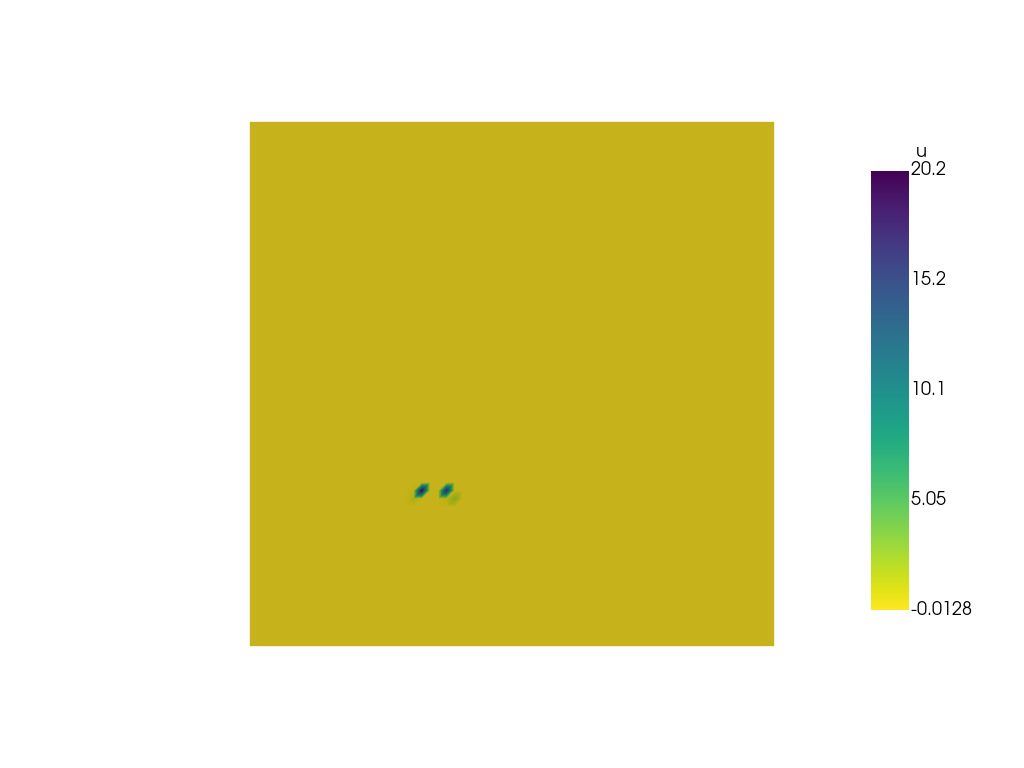}
        \vspace{-3em}
        \caption{$\BB = \AAK^\dagger$ }
    \end{subfigure}
    \begin{subfigure}[b]{0.45\linewidth}        %% or \columnwidth
        \centering
        \includegraphics[width=\linewidth]{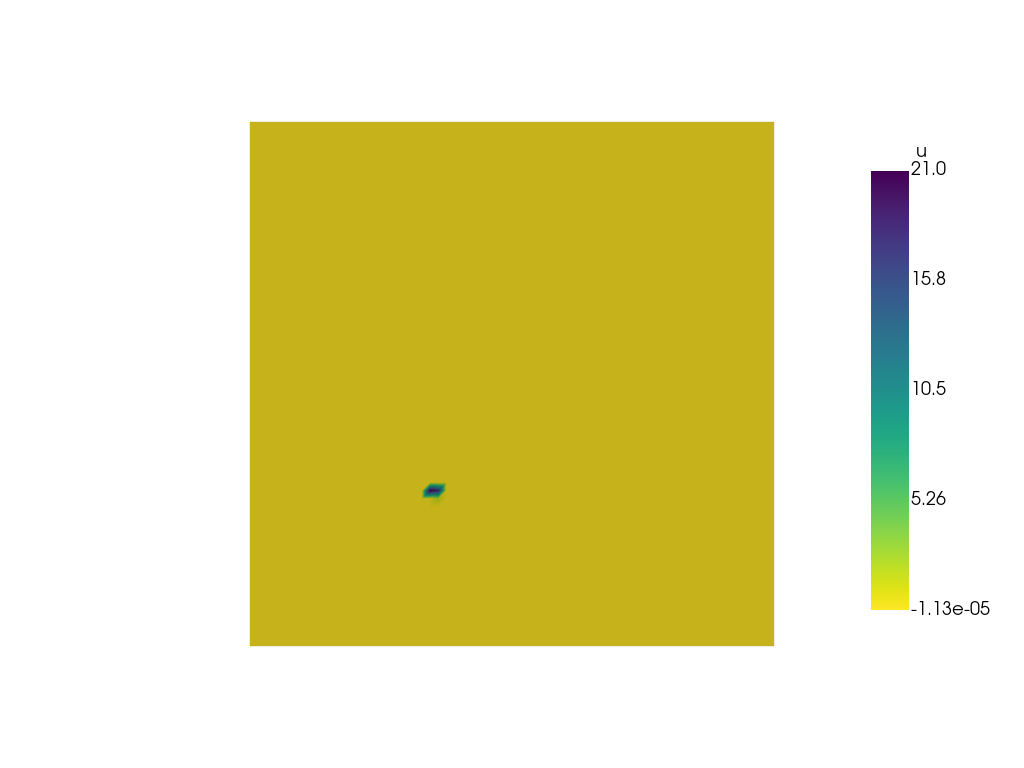}
        \vspace{-3em}
        \caption{$\BB  = \BB_{r} $  }
    \end{subfigure}
    
    \caption{Comparison of the true source and the inverse solutions computed in the case of a single source.}
    \label{one_source}
    
\end{figure}
 
We also considered a case where the true source is comprised of three spatially separated sources, for both the Helmholtz' equation ($\epsilon=-1$ in \eqref{eq: bdry_val_prob}) and the screened Poisson equation ($\epsilon=1$), as illustrated in figures \ref{helm} and \ref{scr_poi}, respectively. Figure \ref{helm} contains results computed with Gaussian noise added to the observation data, whereas Figure \ref{scr_poi} shows the outcome of noise-free simulations.

The noise vector $\boldsymbol{\eta}$ was generated from the normal distribution $\mathcal{N}(0,\sigma^2\mathsf{I})$ and then rescaled so that the ratio $\frac{\|\boldsymbol{\eta}\|_2}{\|\yy\|_2}$ between the noise-free data $\yy$ and the noise $\boldsymbol{\eta}$ was 0.02, that is, a noise level of 2\%.

Note that $\BB=\AAK^\dagger$ provides rather accurate recoveries, and that the true sources "collapse" to one source in the results generated with the other two choices of $\BB$ with noisy data and $\epsilon=-1$, see Figure \ref{helm}.

\begin{figure}[H]
    \centering
    \begin{subfigure}[b]{0.45\linewidth}        %% or \columnwidth
        \centering
        \includegraphics[width=\linewidth]{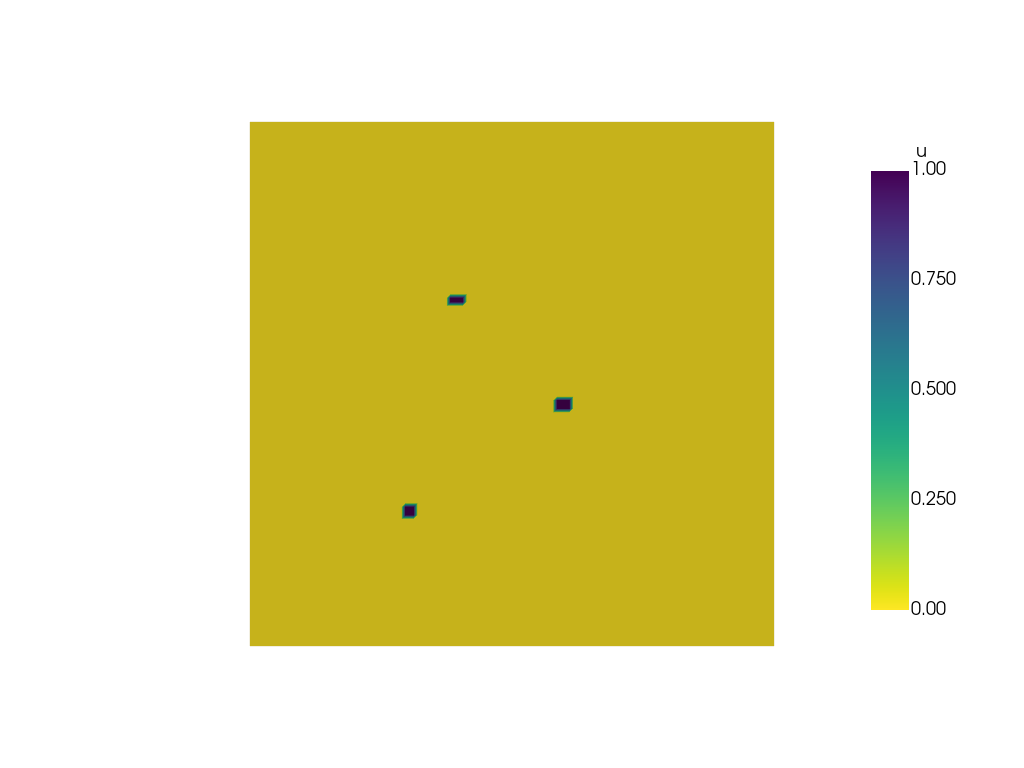}
        \vspace{-3em}
        \label{true_single}
        \caption{True sources}
    \end{subfigure}
    \begin{subfigure}[b]{0.45\linewidth}        %% or \columnwidth
        \centering
        \includegraphics[width=\linewidth]{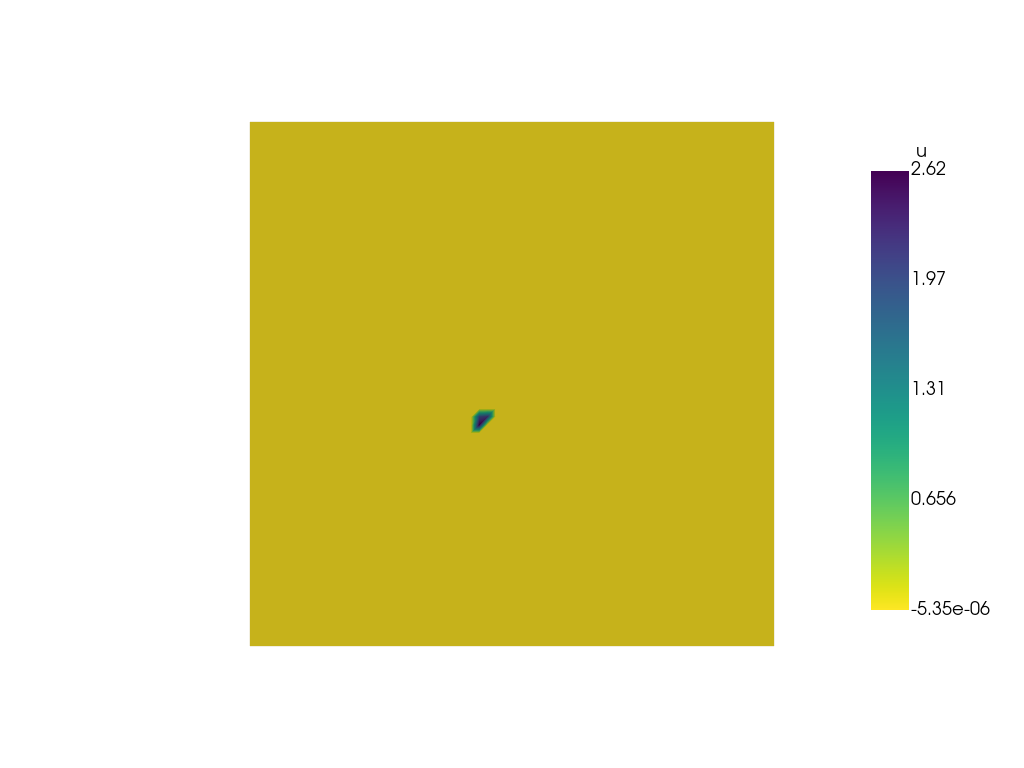}
        \vspace{-3em}
        \caption{$\BB = \II$}
    \end{subfigure}
    \begin{subfigure}[b]{0.45\linewidth}        %% or \columnwidth
        \centering
        \includegraphics[width=\linewidth]{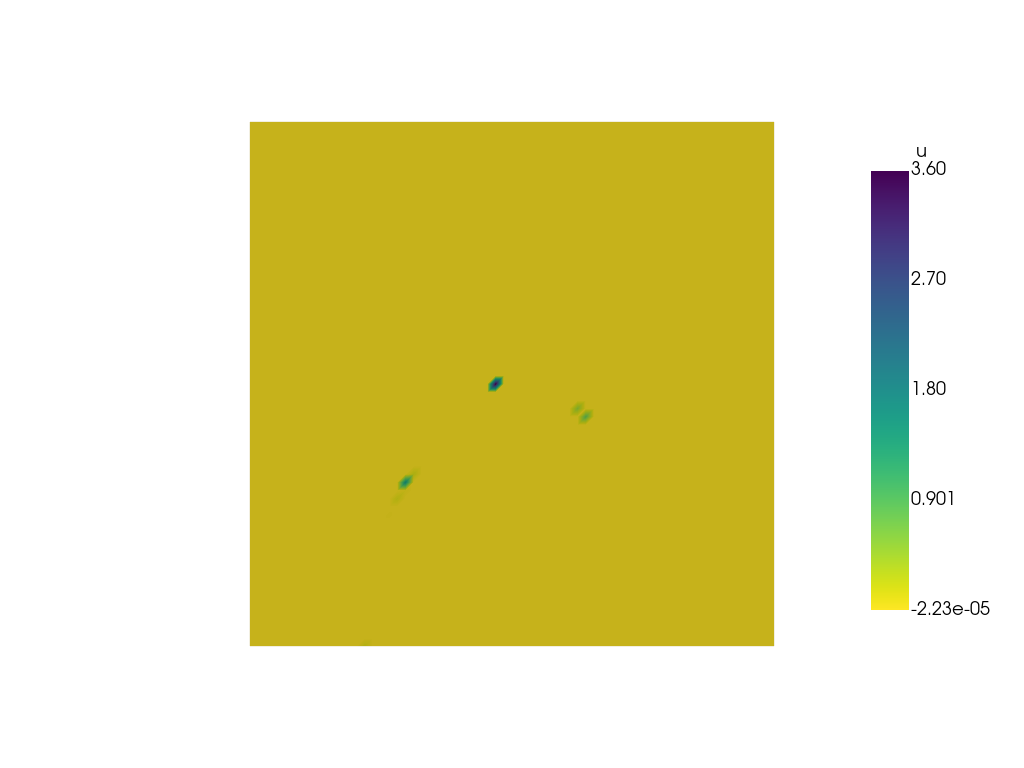}
        \vspace{-3em}
        \caption{$\BB = \AAK^\dagger$ }
    \end{subfigure}
    \begin{subfigure}[b]{0.45\linewidth}        %% or \columnwidth
        \centering
        \includegraphics[width=\linewidth]{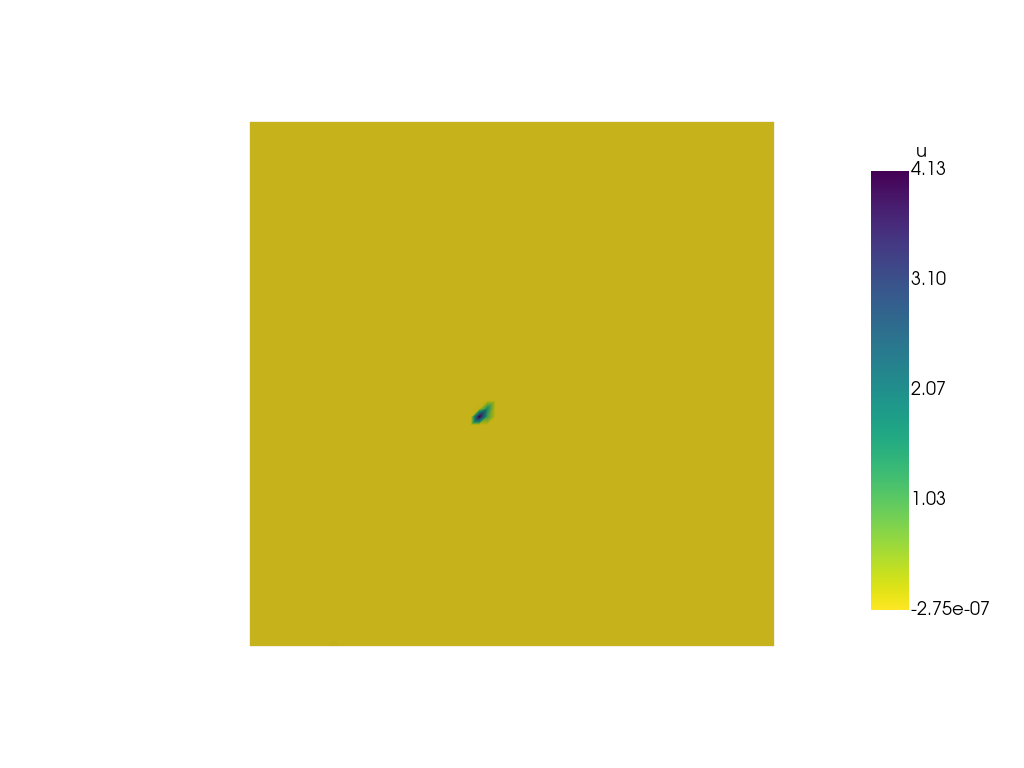}
        \vspace{-3em}
        \caption{$\BB  = \BB_{r} $  }
    \end{subfigure}
    
    \caption{Comparison of the true sources and the inverse solutions computed with $\epsilon = -1$ in \eqref{eq: bdry_val_prob}, i.e., with the Helmholtz' equation. Simulations with 2\% noise added to the (synthetic) observation data.}
    \label{helm}
    
\end{figure}

% Screened poison

\begin{figure}[H]
    \centering
    \begin{subfigure}[b]{0.45\linewidth}        %% or \columnwidth
        \centering
        \includegraphics[width=\linewidth]{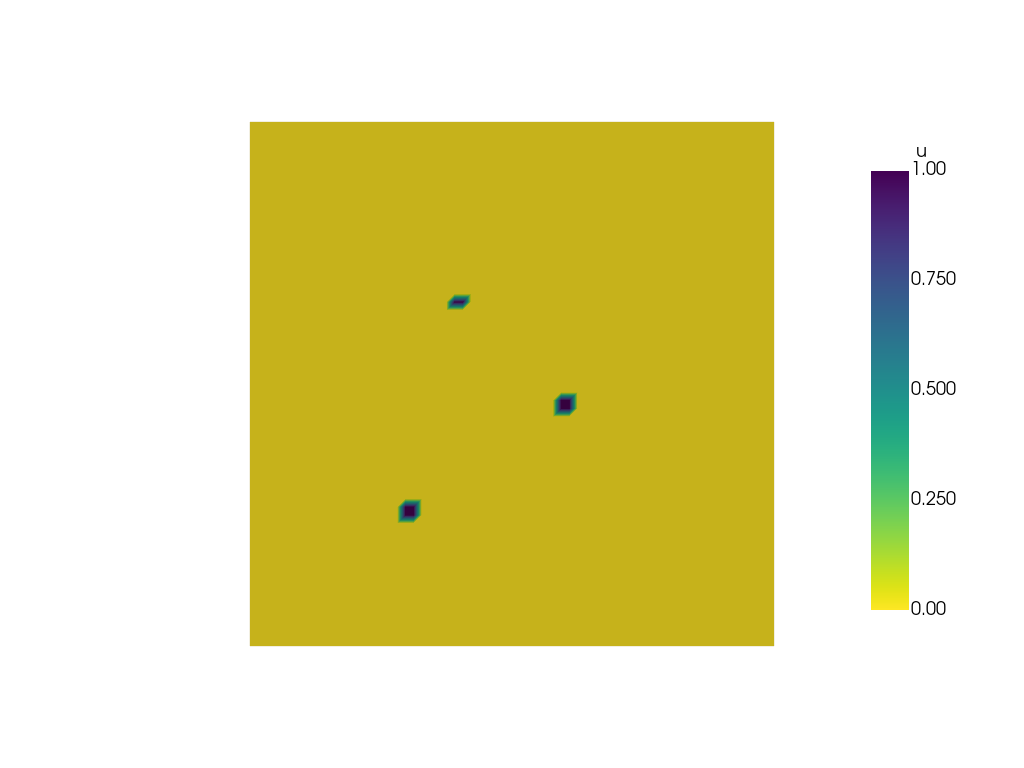}
        \vspace{-3em}
        \label{true_single}
        \caption{True sources}
    \end{subfigure}
    \begin{subfigure}[b]{0.45\linewidth}        %% or \columnwidth
        \centering
        \includegraphics[width=\linewidth]{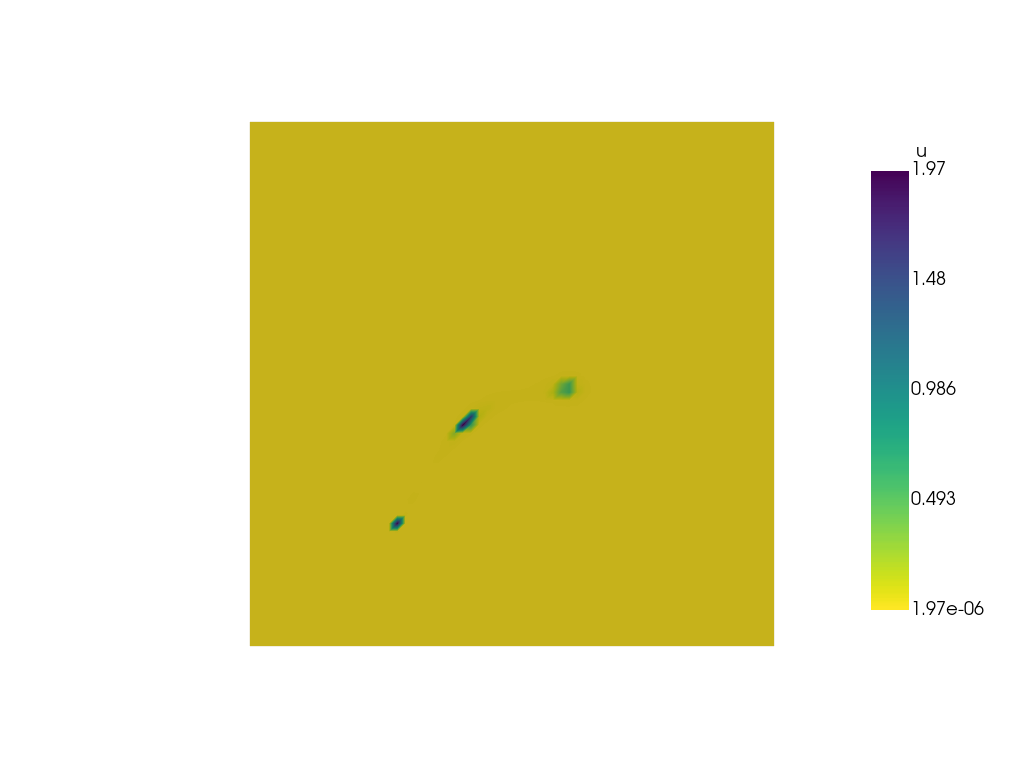}
        \vspace{-3em}
        \caption{$\BB = \II$}
    \end{subfigure}
    \begin{subfigure}[b]{0.45\linewidth}        %% or \columnwidth
        \centering
        \includegraphics[width=\linewidth]{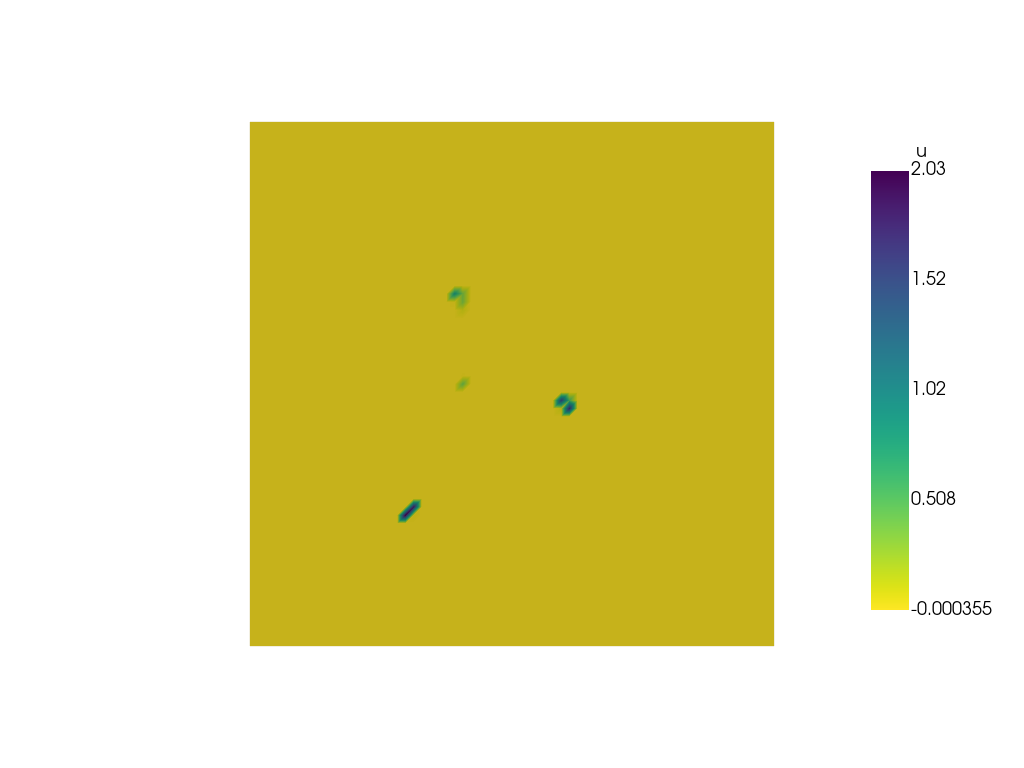}
        \vspace{-3em}
        \caption{$\BB = \AAK^\dagger$ }
    \end{subfigure}
    \begin{subfigure}[b]{0.45\linewidth}        %% or \columnwidth
        \centering
        \includegraphics[width=\linewidth]{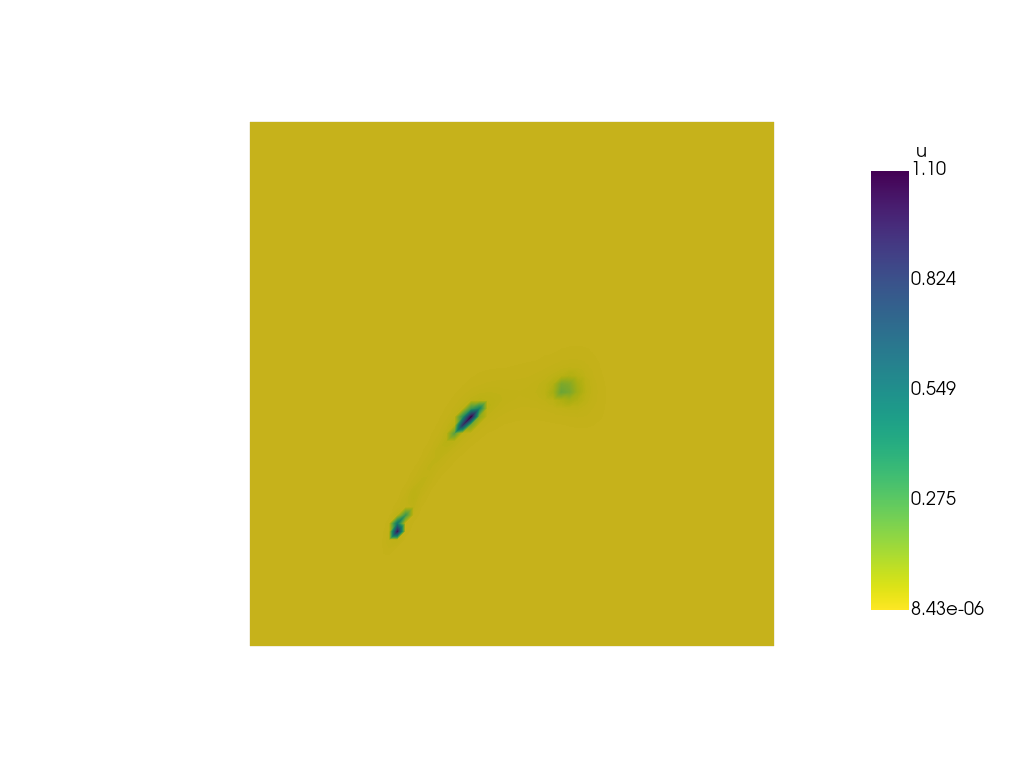}
        \vspace{-3em}
        \caption{$\BB  = \BB_{r} $  }
    \end{subfigure}
    
    \caption{Comparison of the true sources and the inverse solutions computed  with $\epsilon = 1$ in \eqref{eq: bdry_val_prob}, i.e., with the screened Poisson equation.} 
    \label{scr_poi}
    
\end{figure}

\rem{To connect the results of Figure \ref{scr_poi} with Theorem \ref{thm:disjoint_supports}, we quantify the violation of the disjointness condition \eqref{eq:disjoint_assumption} for each choice of B. The forward matrix, being a discretization of an elliptic PDE, produces vectors $\CC^T\CC\ee_j$ with global support. A meaningful measure of disjointness must therefore operate on a thresholded version of these vectors where small components are set to zero. Specifically, for each source $\ee_{j}$, associated with the three dots shown in Figure \ref{scr_poi}(a), we define the vector $\uu_j$ by
\begin{equation*}
    [\uu_j]_l =
    \begin{cases}
        |[\CC^T \CC \ee_j]_l|, & \text{if } |[\CC^T \CC \ee_j]_l| > \tau \|\CC^T \CC \ee_j\|_\infty, \\
        0, & \text{otherwise},
    \end{cases}
\end{equation*}
where $\tau \in [0,1]$ sets the threshold for nullifying components relative to $\|\CC^T \CC \ee_j\|_\infty$, and $[\uu_j]_l$ denotes the $l$'th component of $\uu_j$.
We can then compute the \textit{amount of (weak) disjointness} between $\CC^T \CC \ee_j$ and $\CC^T \CC \ee_k$ as the ratio $\tfrac{\nu}{n}$, where $\nu$ is the number of nonzero components that overlap between $\uu_j$ and $\uu_k$, and $n$ is the length of these vectors. In Figure \ref{support_plot}, $\uu_1$, $\uu_2$ and $\uu_3$ are associated with the upper most dot, the dot in the "center" and the lower most dot in Figure \ref{scr_poi}(a), respectively.}

 \rem{Figure \ref{support_plot} shows that when $\tau$ increases, the amount of overlap decays faster for $\BB = \AAK^\dagger$ than for $\BB = \II$ and $\BB = \BB_{r}$. This might, in light of Theorem \ref{thm:disjoint_supports}, explain why we obtained best results with $\BB =\AAK^\dagger$; see Figure \ref {scr_poi}.}
 
%\rem{Furthermore, as shown in Table \ref{tab:orthogonality}, the pairwise angles between the sources $\CC\ee_i$ and $\CC\ee_j$ are larger, i.e., closer to being orthogonal, for $B = \AAK^\dagger$ than for the other choices of $\BB$.}
 
\begin{figure}[H]
    \centering
    \begin{subfigure}[b]{0.6\linewidth}        
        \centering
    \includegraphics[width=\linewidth]{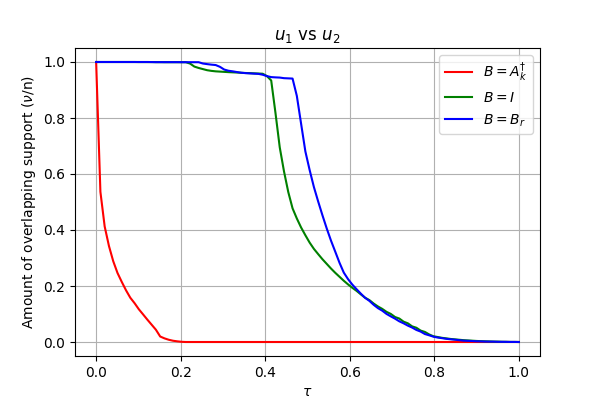}
        \vspace{-1em}
        \caption{}
        %\label{}
    \end{subfigure}\par
    \begin{subfigure}[b]{0.6\linewidth}        
        \centering
    \includegraphics[width=\linewidth]{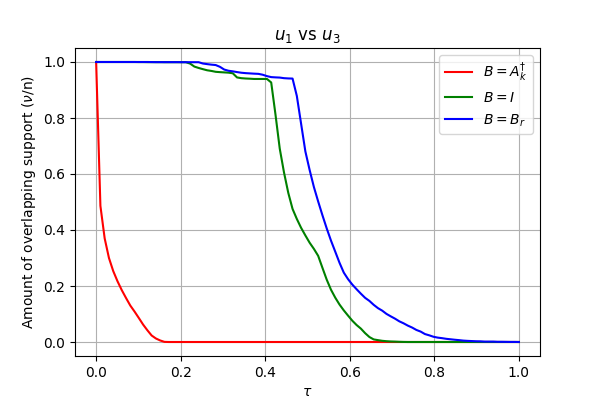}
        \vspace{-1em}
        \caption{}
        %\label{}
    \end{subfigure}\par
    \begin{subfigure}[b]{0.6\linewidth}       
        \centering
    \includegraphics[width=\linewidth]{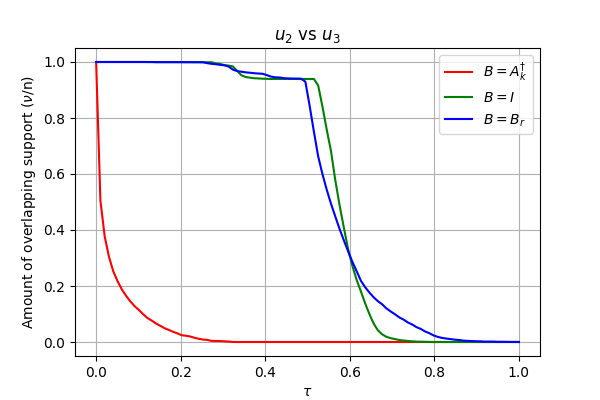}
        \vspace{-1em}
        \caption{}
        %\label{}
    \end{subfigure}
    \caption{\rem{The overlap ratio $\tfrac{\nu}{n}$ between the vectors $\uu_{j}$ and $\uu_{k}$ for different choices of $\BB$. %The indices $j_1$, $j_2$ and $j_3$ refers to the positions for each of the three different sources, cf. Figure \ref{scr_poi}a.
    }
    }
   \label{support_plot}
\end{figure}

\subsection{Almost parallel images of sources}
In this second example, the true source $\xx^*$ is defined in terms of three basis vectors associated with three adjacent grid cells. It is, without doing a deep analysis, reasonable to suspect that these three adjacent sources produce almost parallel images under the matrix $\CC$, making Theorem \ref{thm:almost_same_images} relevant for this case. 
 
To be in alignment with the theory, and as noted in the introduction to this section, we deliberately committed the inverse crime in the first part of this experiment by using the same grid for both the forward and inverse simulations.
In this idealized setting, we were able to almost perfectly recover the composite source consisting of these three adjacent basis vectors, see Figure \ref{coarser_multi}. This is in agreement with Theorem \ref{thm:almost_same_images}. 
 
\begin{figure}[H]
    \centering
    \begin{subfigure}[b]{0.45\linewidth}        %% or \columnwidth
        \centering
        \includegraphics[width=\linewidth]{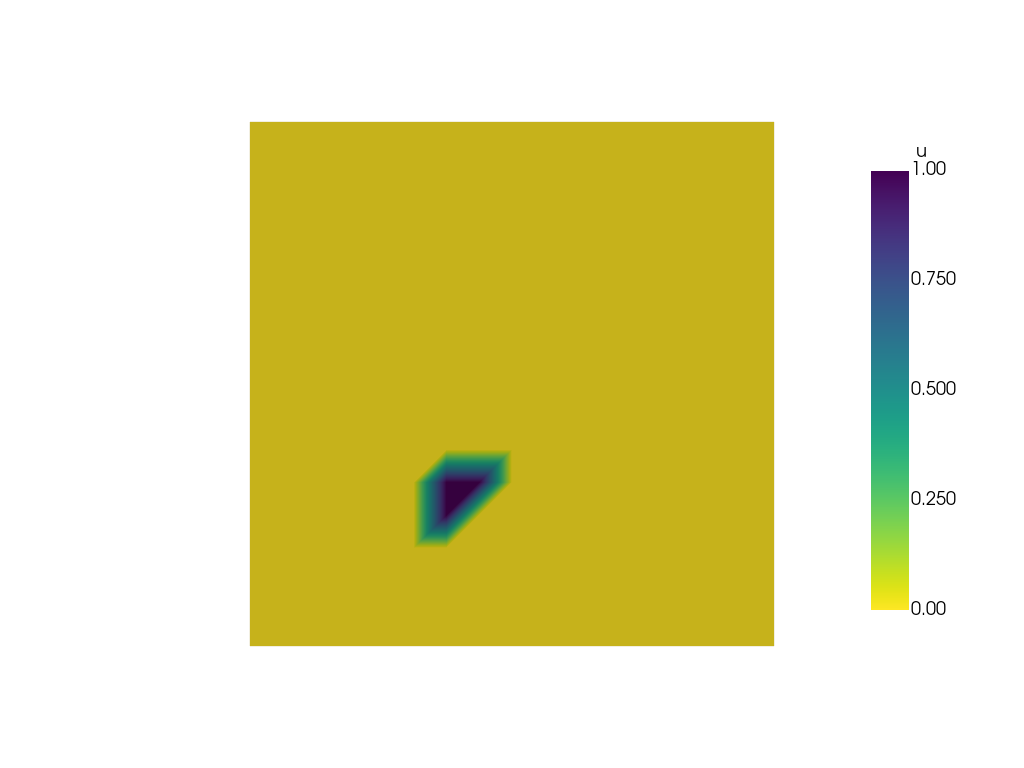}
        \vspace{-3em}
        \caption{True source}
    \end{subfigure}
    \begin{subfigure}[b]{0.45\linewidth}        %% or \columnwidth
        \centering
        \includegraphics[width=\linewidth]{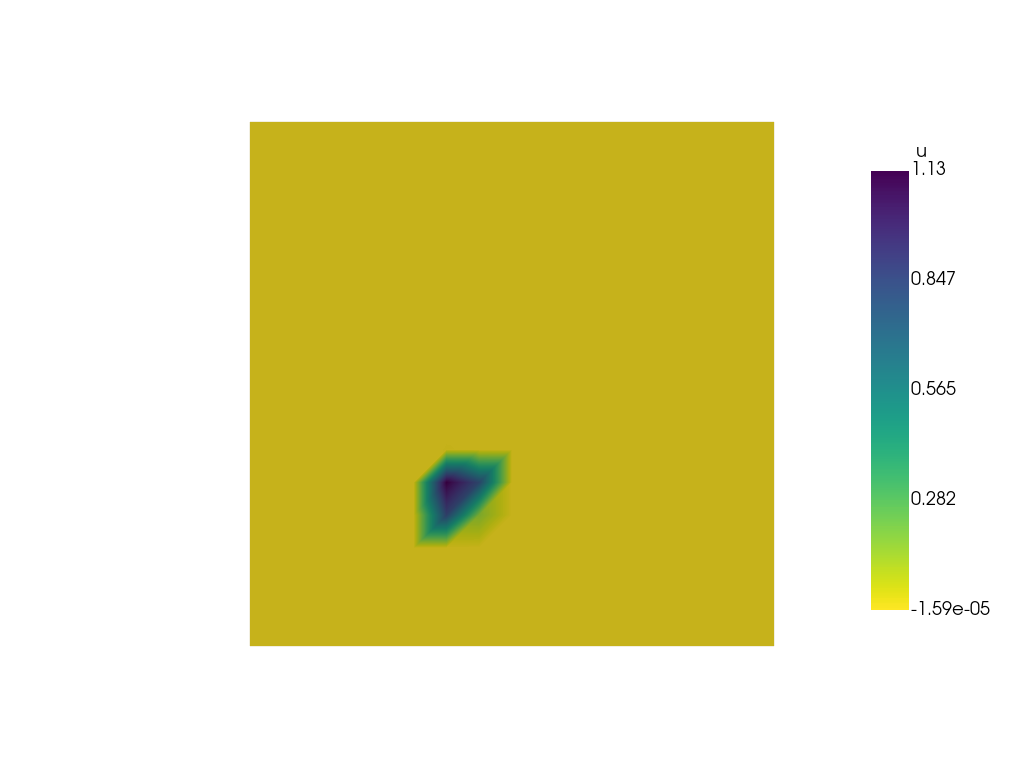}
        \vspace{-3em}
        \caption{$\BB = \II$}
    \end{subfigure}
    \begin{subfigure}[b]{0.45\linewidth}        %% or \columnwidth
        \centering
        \includegraphics[width=\linewidth]{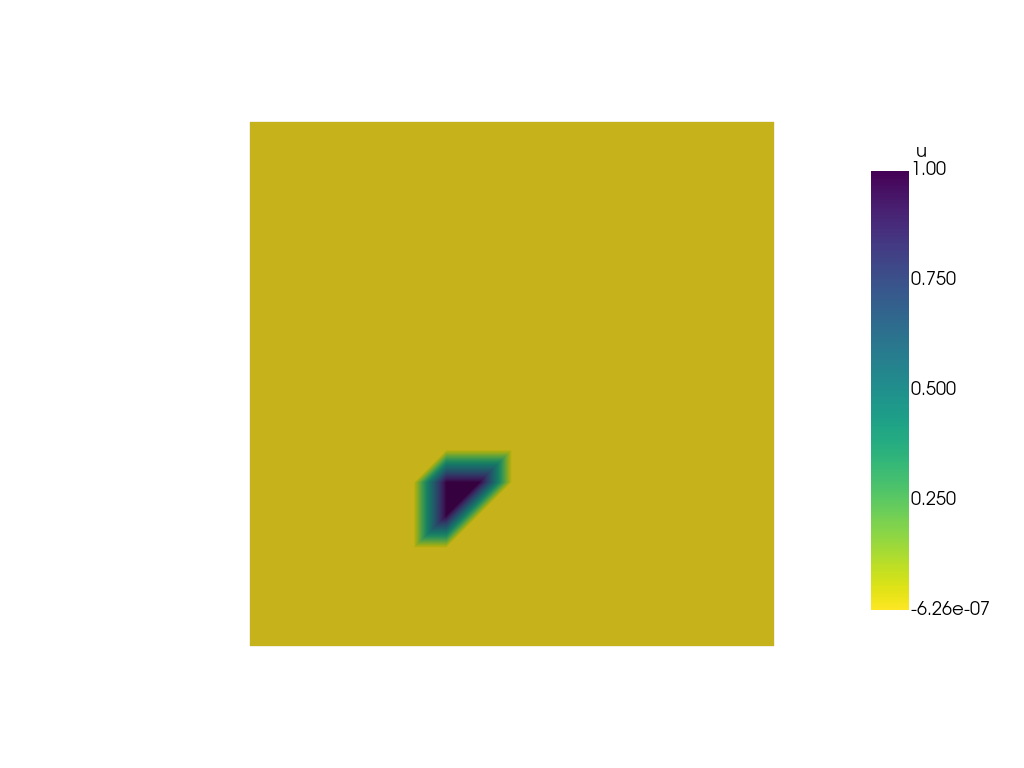}
        \vspace{-3em}
        \caption{$\BB = \AAK^\dagger$ }
    \end{subfigure}
    \begin{subfigure}[b]{0.45\linewidth}        %% or \columnwidth
        \centering
        \includegraphics[width=\linewidth]{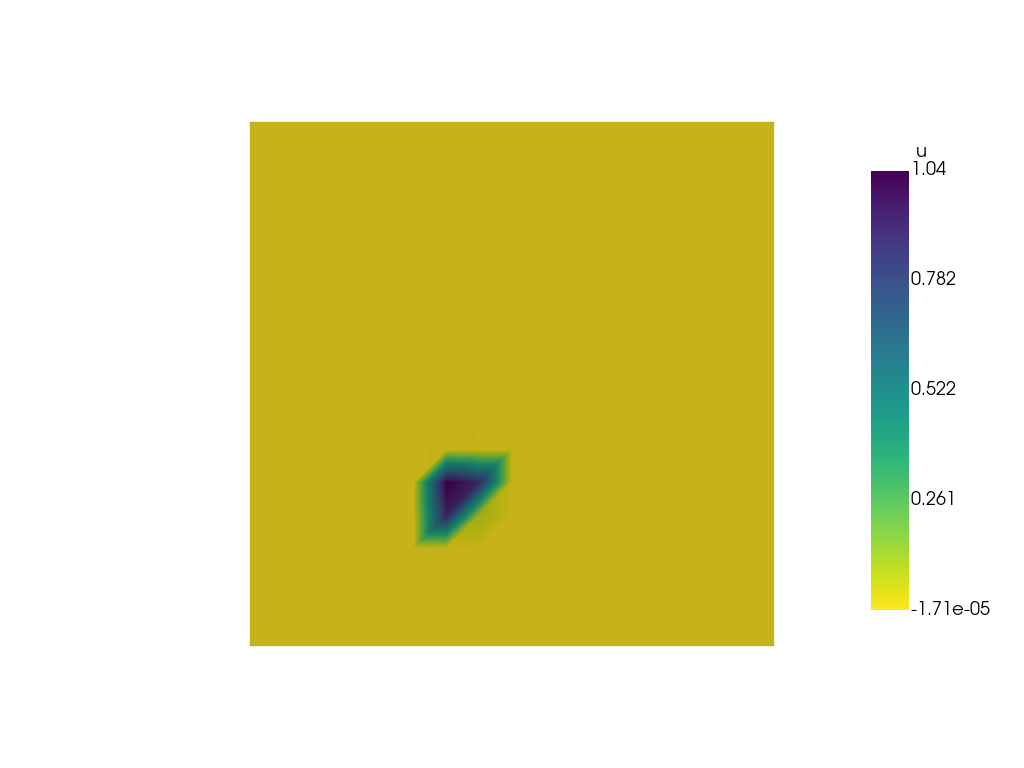}
        \vspace{-3em}
        \caption{$\BB  = \BB_{r} $  }
    \end{subfigure}
    
    \caption{Almost parallel images of the "sub-sources" constituting a composite source. The forward and inverse computations used the same $16 \times 16$ grid.}
    \label{coarser_multi}
    
\end{figure}

For the simulations shown in Figure \ref{finer_multi}, we avoided inverse crimes by using a finer mesh for the data generation than in the reconstruction process. In addition, as can be observed in panel (a) of Figure \ref{finer_multi}, we constructed a true solution with a small gap between its three "sub-sources", which will most likely lead to a violation of condition \eqref{eq:most_parallel}. As a result, the reconstruction schemes only produced single sources, but nevertheless the localization is correct. 
\begin{figure}[H]
    \centering
    \begin{subfigure}[b]{0.45\linewidth}        %% or \columnwidth
        \centering
        \hspace*{0.72cm}
        \includegraphics[width=0.68\linewidth]{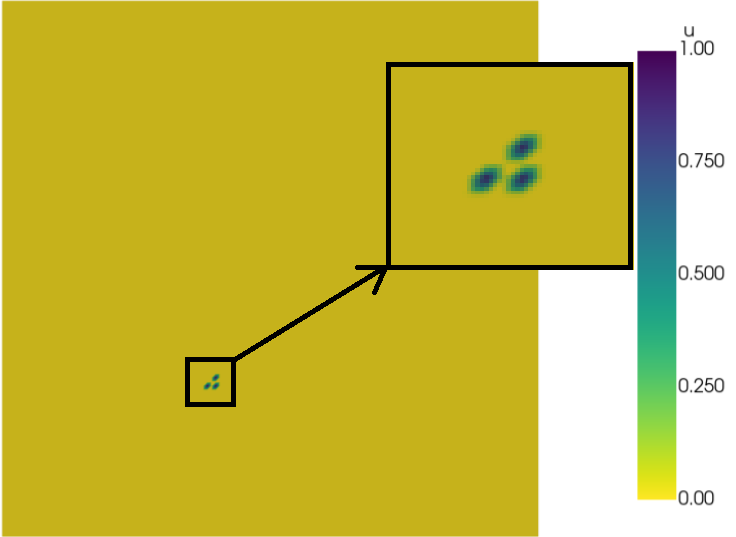}
        %\vspace{-3em}
        \caption{True source}
    \end{subfigure}
    \begin{subfigure}[b]{0.45\linewidth}        %% or \columnwidth
        \centering
        \includegraphics[width=\linewidth]{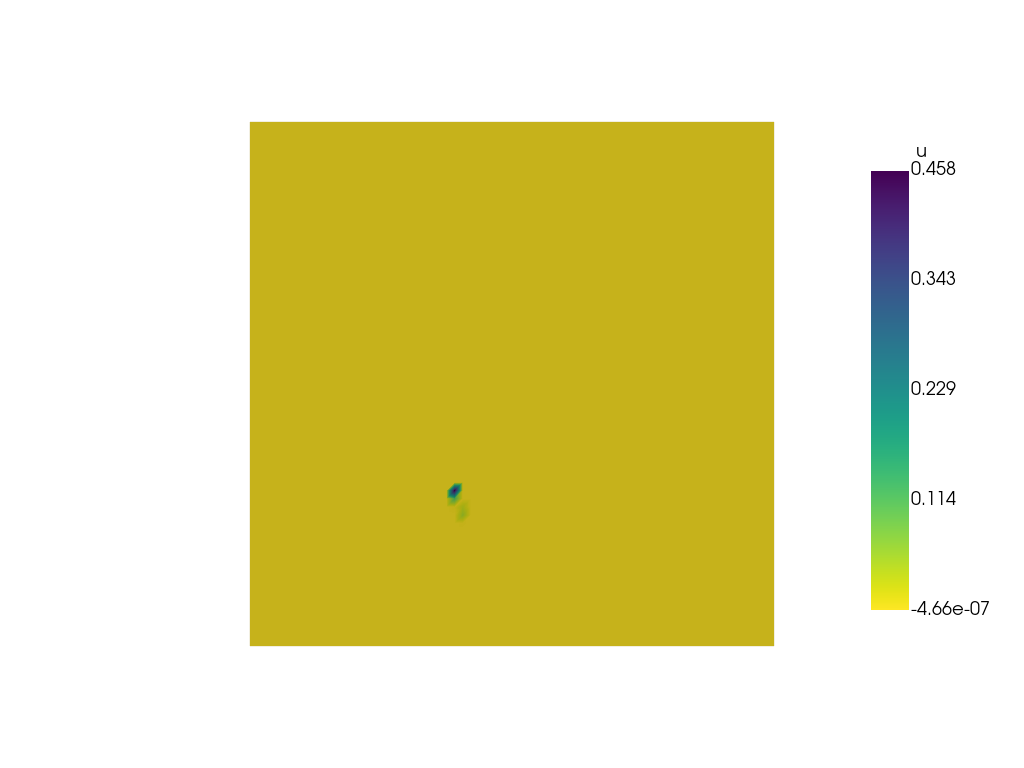}
        \vspace{-3em}
        \caption{$\BB = \II$}
    \end{subfigure}
    \begin{subfigure}[b]{0.45\linewidth}        %% or \columnwidth
        \centering
        \includegraphics[width=\linewidth]{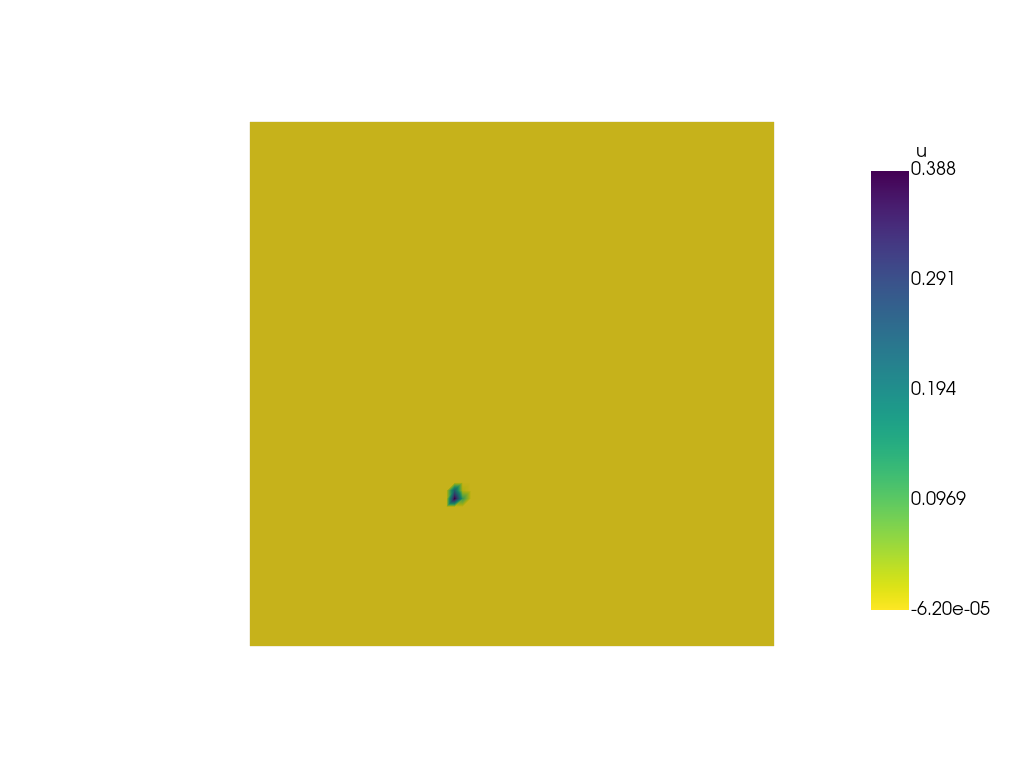}
        \vspace{-3em}
        \caption{$\BB = \AAK^\dagger$ }
    \end{subfigure}
    \begin{subfigure}[b]{0.45\linewidth}        %% or \columnwidth
        \centering
        \includegraphics[width=\linewidth]{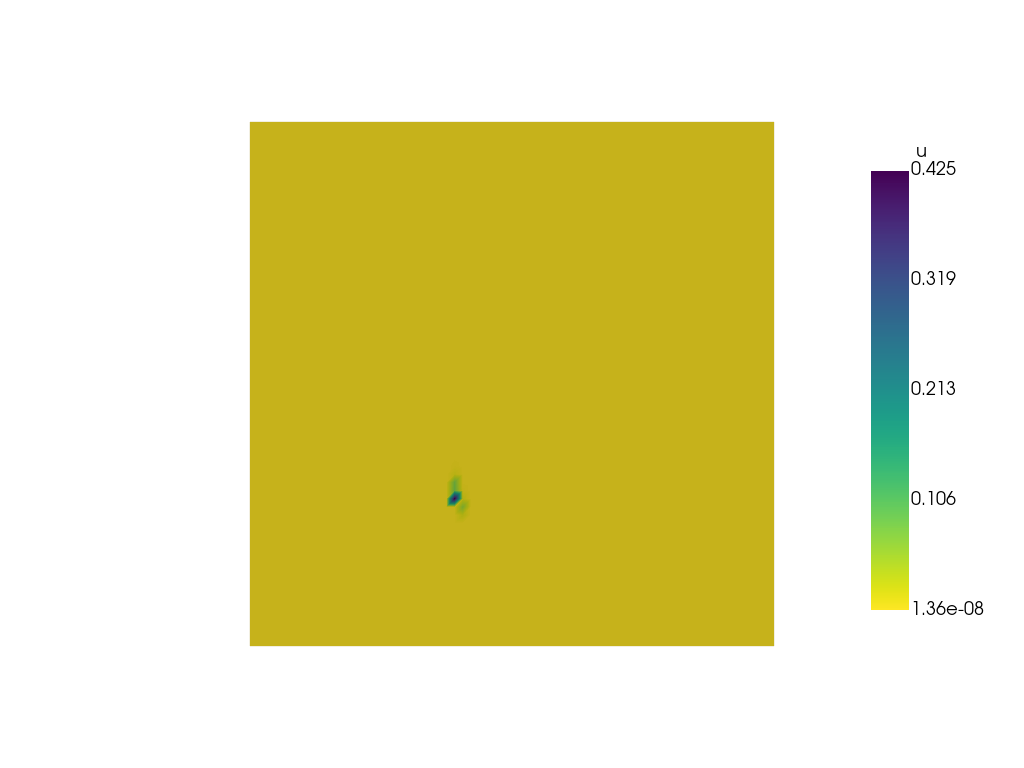}
        \vspace{-3em}
        \caption{$\BB  = \BB_{r} $  }
    \end{subfigure}
    
    \caption{Almost parallel images of the "sub-sources" constituting a composite source. Forward grid of size $128 \times 128$ and inverse grid of size $64 \times 64$.}
    \label{finer_multi}
    
\end{figure}

\subsection{Pre-orthogonalizer}
Proposition \ref{prop:new_B} guarantees that \rem{the orthogonality property \eqref{eq:orthogonality_assumption}, needed in the proof of Theorem \ref{thm:disjoint_supports},} can be satisfied by choosing $\BB = \YY^\dagger$, where $\YY$ is defined in \eqref{def_Y} and is constructed by selecting certain columns of $\AAA$. %In this example, we can see the choice of columns in Figure \ref{fig:YY}.
Figure \ref{fig:YY} illustrates this selection process for the present example: Each column of $\YY$ contains the forward image under $\AAA$ of a basis vector associated with one of the "dots"/grid-cells in this figure. (We refer to $\YY$ as a pre-orthogonalizer because its use ensures that the orthogonality \rem{\eqref{eq:orthogonality_assumption}} holds). 

It is not likely that condition \rem{\eqref{eq:disjoint_assumption}} in Theorem \ref{thm:disjoint_supports} \rem{also is} satisfied, but in this example we will nevertheless illustrate numerically how the choice $\BB = \YY^{\dagger}$ can potentially improve recoveries. 

As we can see in Figure \ref{fig:ex1_0_01_noise}, we consider a true configuration consisting of two sources and two sinks, which are well separated. We observe that the use of $ \YY^\dagger$ provides a more accurate recovery of the true sources and sinks compared with $\AAK^\dagger$.

\begin{figure}[H]
    \centering
    \includegraphics[width=0.7\linewidth]{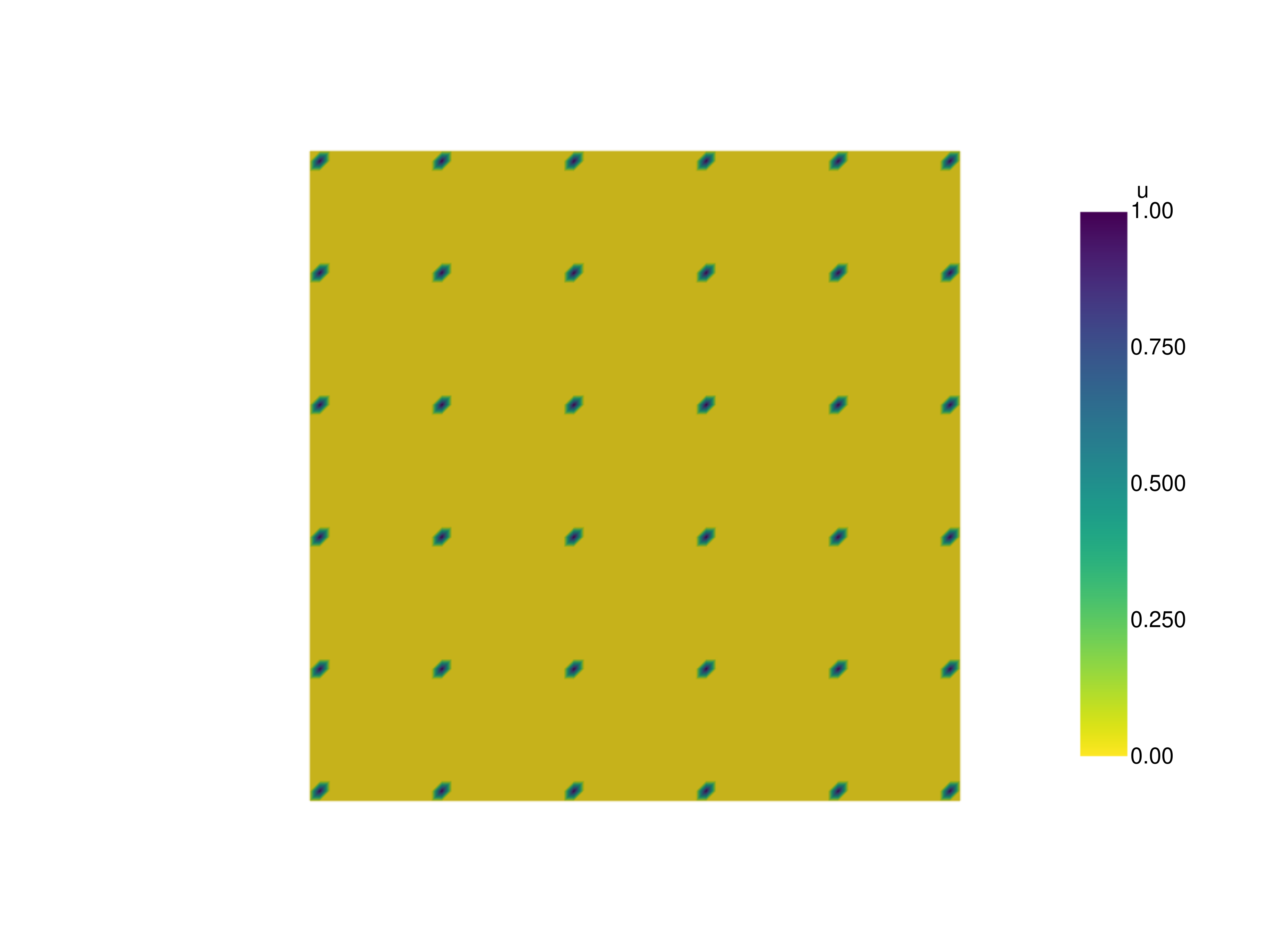}
    \vspace{-3em}
    \caption{Illustration of which subcolumns of $\AAA$ that is used to define $\YY$.}\label{fig:YY}
\end{figure}

\begin{figure}[H]
    \centering
    \begin{subfigure}[b]{0.6\linewidth}        %% or \columnwidth
        \centering
        \includegraphics[width=\linewidth]{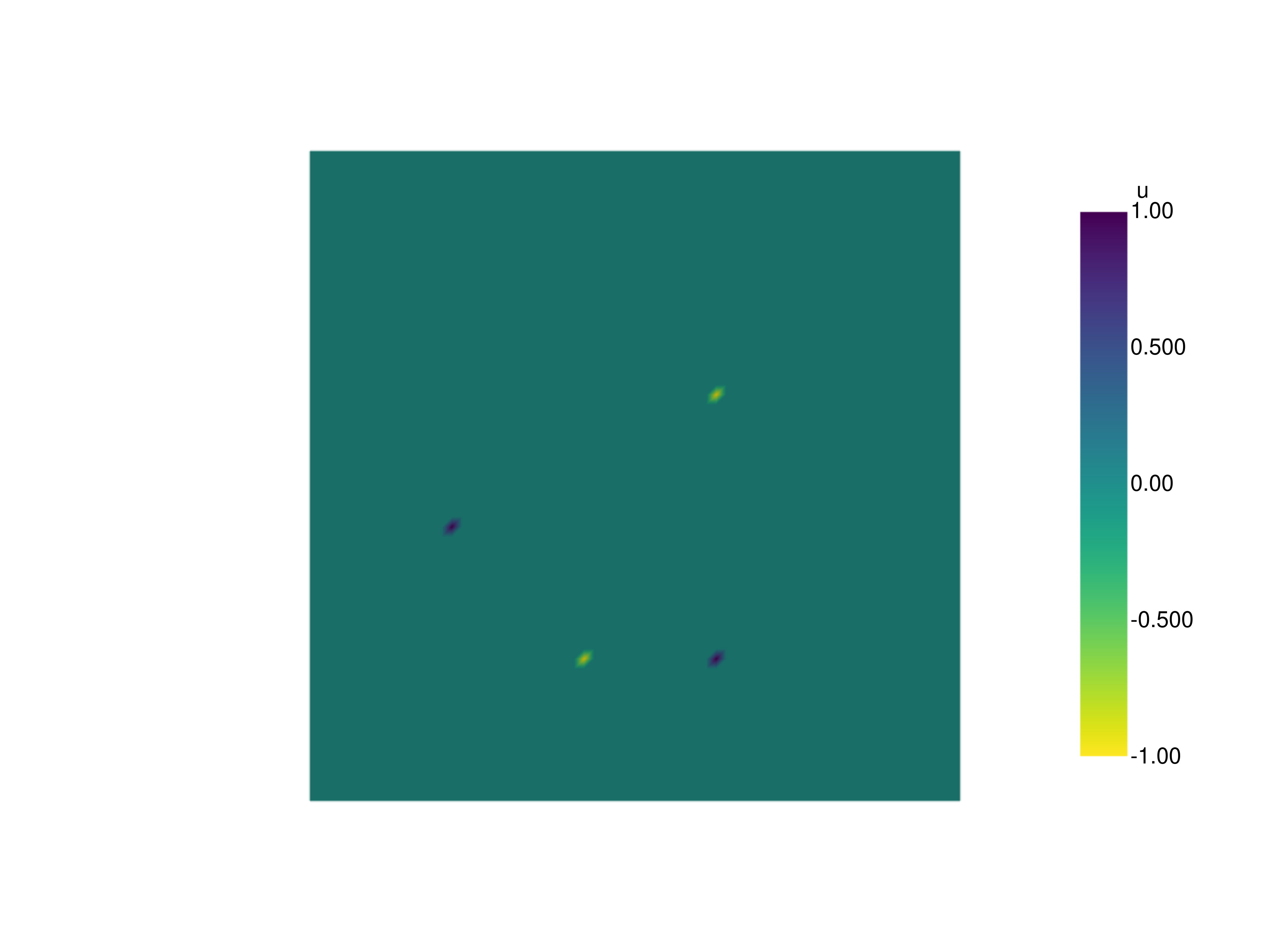}
        \vspace{-3em}
        \caption{True sources and sinks.}
    \end{subfigure}\par
    \begin{subfigure}[b]{0.6\linewidth}        %% or \columnwidth
        \centering
        \includegraphics[width=\linewidth]{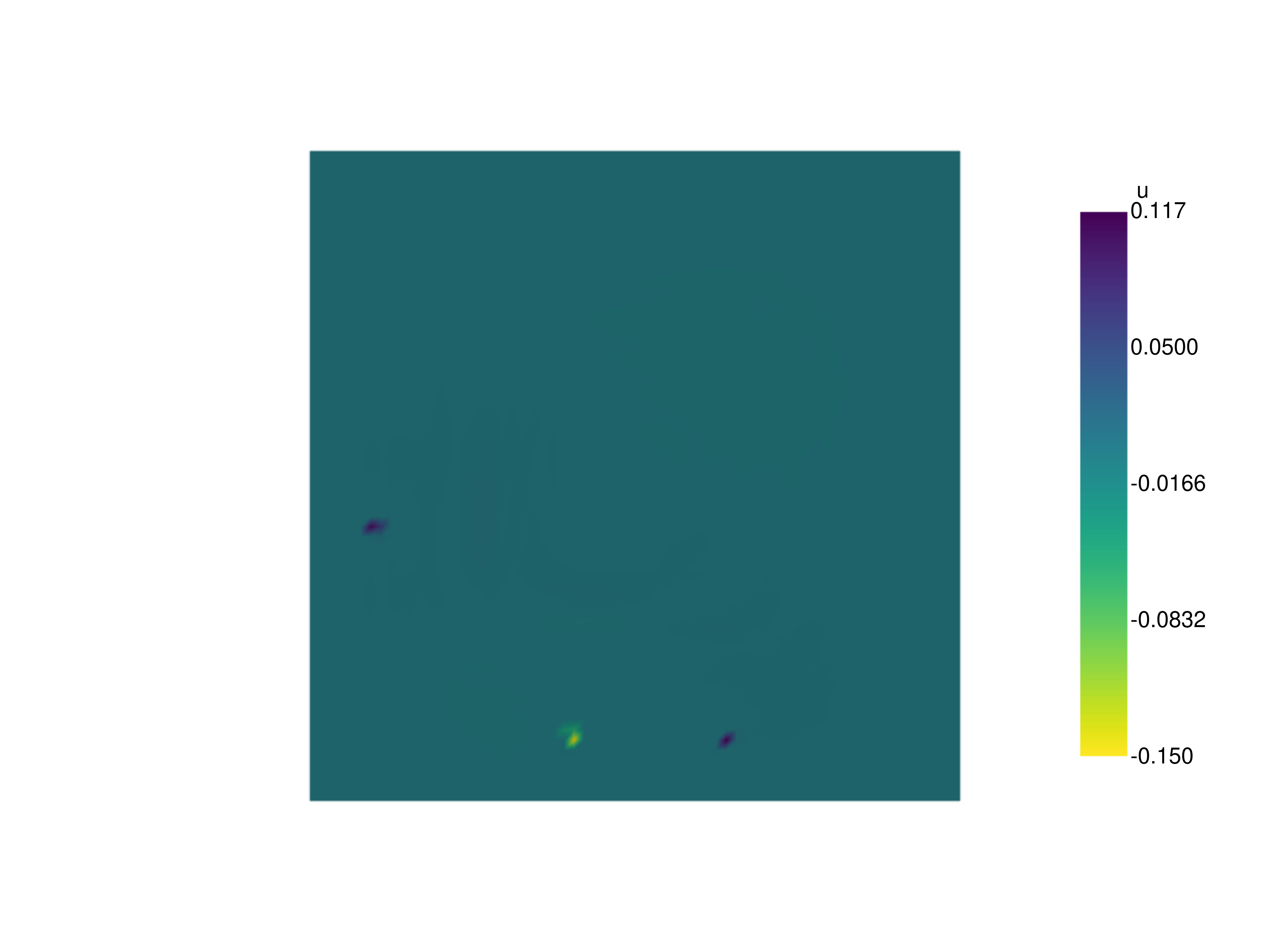}
        \vspace{-3em}
        \caption{$\BB = \AAK^\dagger$.}
    \end{subfigure}\par
    \begin{subfigure}[b]{0.6\linewidth}        %% or \columnwidth
        \centering
        \includegraphics[width=\linewidth]{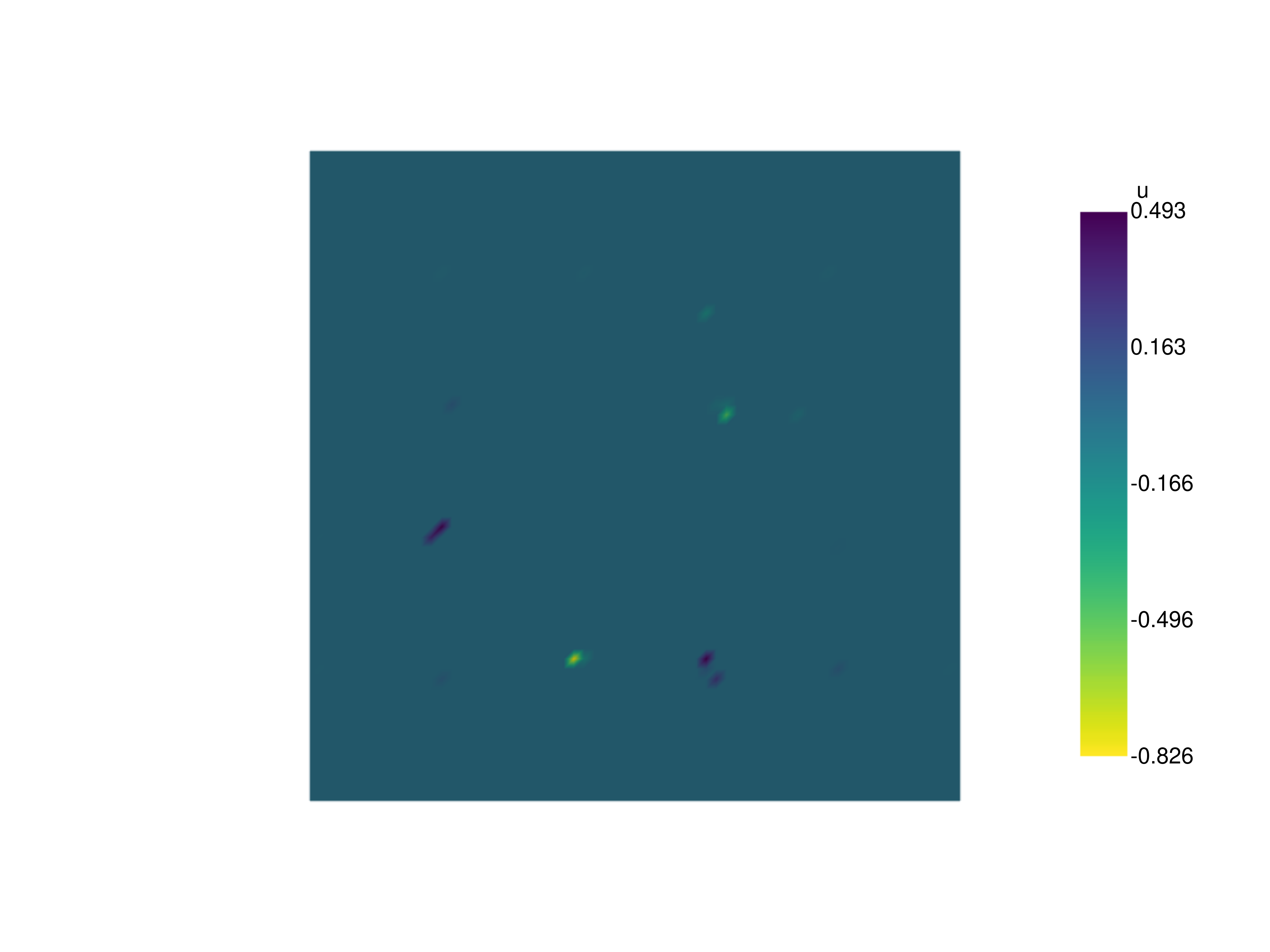}
        \vspace{-3em}
        \caption{$\BB = \YY^\dagger$.}
    \end{subfigure}
    \caption{True setup and inverse solutions computed with $\BB=\AAK^\dagger$ and $\BB=\YY^\dagger$.}
    \label{fig:ex1_0_01_noise}
\end{figure}
\section{Summary}

We have examined the effects of different auxiliary operators $\BB$, employed in the construction of the weight matrix $\WW$, for solving inverse source problems. Our results pave the way for the recovery of sources when the weights are constructed in terms of the mapping $\CC = \BB\AAA$, where $\AAA$ is the involved forward matrix, which typically has a large null space. Ranging from single to well-separated multiple sources, our findings provide strong evidence for the potential successful recovery of sources under certain assumptions.

The numerical experiments presented in this paper align with our theoretical results. In the case where the true source was constructed using a single basis vector, the inverse solution procedures worked very well. Moreover,  in some specific complex scenarios, such as the one involving three adjacent sources with almost parallel images, we could also ensure perfect recovery under specific assumptions, which we exemplified numerically. 

Across most of our experiments, choosing $\BB = \AAK^\dagger$, a truncated version of the pseudo inverse of $\AAA$, produced better results compared to the other choices of $\BB$ that we tested. However, in the case of multiple sources and sinks, we observed numerically that selecting certain columns of $\AAA$, leading to the operator $\BB=\YY^\dagger$, led to improved recovery compared with employing $\BB=\AAK^\dagger$. 

An interesting open question that arises from our research is how to select the weighting operator $\BB$ based on a specific problem setup, i.e., how to use certain properties of the forward matrix $\AAA$ to design $\BB$. Although the theory presented in this work ensures the recovery of sources under certain conditions, developing the theory further based on specific characteristics of $\AAA$ and $\BB$ may be of interest for future work.

\appendix 
\section{\large Proof of Theorem \ref{thm:varThm}} \label{proof_thm:varThm}
\begin{proof}
Using the notation $$h(\zz)=\| \zz \|_1,$$ the first order optimality condition for \eqref{eq:varform_simple} reads 
\begin{equation*}
    \OO \in \CC^T (\CC \xx-\CC \ee_j) + \alpha \WW \partial h(\WW \xx), 
\end{equation*}
where $\partial h$ denotes the subgradient of $h$. The involved cost-functional is convex, and this condition is thus both necessary and sufficient. Inserting $\xx = \gamma_\alpha \ee_j$ to the condition above, we obtain 
\begin{equation*}
    (1-\gamma_\alpha) \CC^T \CC \ee_j \in \alpha \WW \partial h(\gamma_\alpha \WW \ee_j),  
\end{equation*}
or, alternatively,
\begin{equation} \label{eq:fonc}
    \frac{(1-\gamma_\alpha)}{\alpha} \WW^{-1} \CC^T \CC \ee_j \in \partial h(\gamma_\alpha \WW \ee_j).   
\end{equation}
Since the entries of the diagonal matrix $\WW$ are strictly positive, it follows from standard computations that
\begin{equation*}
(\partial h(\gamma_\alpha \WW \ee_j),\ee_i) = \left\{ 
\begin{array}{cc}
   1,  & i=j, \\
    \left[ -1,1 \right],   & i \neq j, 
\end{array}
\right.
\end{equation*}
provided that $\gamma_\alpha > 0$. We thus may write \eqref{eq:fonc} in the following form 
\begin{equation}
\label{fonc_form}
    \frac{(1-\gamma_\alpha)}{\alpha} (\WW^{-1} \CC^T \CC \ee_j, \ee_i) \in 
    \left\{
    \begin{array}{cc}
   1,  & i=j, \\
    \left[ -1,1 \right],   & i \neq j. 
    \end{array}
    \right.
\end{equation}
Invoking \eqref{eq:maximum_expression} we therefore obtain the requirement  
\begin{equation} \label{eq:fonc_final}
    \frac{(1-\gamma_\alpha)}{\alpha} \|\CC\ee_j\| \left(\frac{\CC\ee_j}{\|\CC\ee_j\|}, \frac{\CC\ee_i}{\|\CC\ee_i\|}\right)  \in 
    \left\{
    \begin{array}{cc}
   1,  & i=j, \\
    \left[ -1,1 \right],   & i \neq j. 
    \end{array}
    \right.
\end{equation}
With the choice
\begin{equation*}
    \gamma_\alpha=1 - \frac{\alpha}{\wj},
\end{equation*}
it follows that \eqref{eq:fonc_final} holds for $i=j$, by recalling that $\wj=\|\CC\ee_j\|$. 
From Cauchy–Schwarz' inequality, we observe that \eqref{eq:fonc_final} also is satisfied when $i \neq j$, proving existence of the minimizer.
\vspace{5mm}

To show uniqueness, we first denote the cost-functional by $\mathfrak{J}$, i.e.,
\begin{equation*} 
  \mathfrak{J(\xx)} =  \frac{1}{2}\|\CC\xx - \CC\ee_j\|^2 + \alpha \|\WW\xx\|_1.
\end{equation*}
Let $\yy \in \mathbb{R}^n , \yy \neq \xx_{\alpha} $ be arbitrary. We will show that no such $\yy$ can be a minimizer, i.e., the minimizer is unique.
We split the analysis into two cases: \\ \\
\noindent
\textbf{Case 1: $\yy = c\xx_{\alpha}, c \neq 1$.} \\
By the convexity of the cost-functional in \eqref{eq:varform_simple} and the argument presented above, it follows that $\yy = c\xx_{\alpha}$ cannot be a minimizer unless $c = 1$.\\ \\ 
\noindent
\textbf{Case 2: $\yy \neq c\xx_{\alpha}$.} \\
In this case there must exist at least one component $y_k, k \neq j$, of $\yy$ such that  $y_k \neq 0 $. 
Consider
\begin{equation*}
  \mathfrak{J(\yy)} - \mathfrak{J(\xx_{\alpha})} = \frac{1}{2}\|\CC\yy - \CC\ee_j\|^2 - \frac{1}{2}\|\CC\xx_\alpha - \CC\ee_j\|^2 + \alpha\left(\|\WW\yy\|_1 - \|\WW\xx_\alpha\|_1\right).  
\end{equation*}
Also, by the definition of the subdifferential,
\begin{equation*}
    h(\WW\yy) - h(\WW\xx_\alpha) \geq \zz^T (\WW\yy - \WW\xx_\alpha)
\end{equation*}
for any $\zz \in \partial h(\WW\xx_\alpha)$. Consequently, we get
\begin{align}
     \mathfrak{J(\yy)} - \mathfrak{J(\xx_{\alpha})} &= \frac{1}{2}\|\CC\yy - \CC\ee_j\|^2 - \frac{1}{2}\|\CC\xx_\alpha - \CC\ee_j\|^2 \nonumber \\ & + \alpha\left( h(\WW\yy) - h(\WW\xx_\alpha) \right) \nonumber \\ &
     \geq \frac{1}{2}\|\CC\yy - \CC\ee_j\|^2 - \frac{1}{2}\|\CC\xx_\alpha - \CC\ee_j\|^2 \nonumber \\ & + \alpha\zz^T (\WW\yy - \WW\xx_\alpha)  \label{eq:costdiff}
\end{align}
Recall that $\xx_\alpha = \gamma_\alpha \ee_j$.
From Lemma \eqref{lemma} , we can write \eqref{fonc_form} as
\begin{align}
    \frac{1}{\alpha}\left(\WW^{-1}\CC^T \CC (\ee_j - \xx_{\alpha}), \ee_i\right) &\in 
    \label{z_value}
    \left\{
    \begin{array}{cc}
       1,  & i=j, \\
       \left( -1,1 \right),   & i \neq j, 
    \end{array}
    \right.\\
    &\subset 
    \left\{
       \begin{array}{cc}
       1,  & i=j, \\
       \left[ -1,1 \right],   & i \neq j. 
    \end{array}
    \right.\\
    & = \left( \partial h (\WW \xx_{\alpha}), \ee_i \right)
\end{align}
This implies that
\begin{equation}
\label{z_tilde_value}
     \frac{1}{\alpha} \WW^{-1}\CC^T \CC (\ee_j - \xx_{\alpha}) \in  \partial h (\WW \xx_{\alpha}) .
\end{equation}
However, choosing $\zz = \frac{1}{\alpha}  \WW^{-1}\CC^T \CC (\ee_j - \xx_{\alpha})$ does not immediately lead to a strict inequality in \eqref{eq:costdiff}. Consequently, we must find a better choice of $\zz$. Without loss of generality\footnote{If rather $[\WW\yy - \WW\xx_\alpha]_k < 0$ we could simply choose $\tilde{z}_k = -1$ and proceed in a similar fashion.}, we can assume that $ \left[ \WW\yy - \WW\xx_{\alpha}\right]_{k}>0 $ and choose $\tilde{\zz}= \left[\tilde{z}_1, \tilde{z}_2, \dots, \tilde{z}_n\right]^T $, where $\tilde{z}_i$ is defined as:
\begin{align*}
   \tilde{z}_i  &=
    \left\{
    \begin{array}{cc}
       1,  & i=k, \\
       \frac{1}{\alpha}\left(\WW^{-1}\CC^T \CC (\ee_j - \xx_{\alpha}), \ee_i\right),   & i \neq k, 
    \end{array}
    \right.\\ 
\end{align*}
Since the condition \eqref{z_tilde_value} holds, it follows that $\tilde{\zz} \in \partial h (\WW \xx_{\alpha}).$

From \eqref{z_value} we have
$[\frac{1}{\alpha}\WW^{-1}\CC^T \CC (\ee_j - \xx_{\alpha})]_{k} < 1 $ and therefore we get the strict inequality
  \begin{equation*}
      \tilde{\zz}^T (\WW\yy - \WW\xx_\alpha) > \frac{1}{\alpha}\WW^{-1}\CC^T \CC (\ee_j - \xx_{\alpha})^T (\WW\yy - \WW\xx_\alpha).
  \end{equation*}
Finally, combining this inequality with \eqref{eq:costdiff} we obtain
  \begin{align*}
     \mathfrak{J}(\yy) - \mathfrak{J}(\xx_{\alpha})  &
     \geq \frac{1}{2}\|\CC\yy - \CC\ee_j\|^2 - \frac{1}{2}\|\CC\xx_\alpha - \CC\ee_j\|^2 \\
     &\quad + \alpha\tilde{\zz}^T (\WW\yy - \WW \xx_\alpha) \\
     & > \frac{1}{2}\|\CC\yy - \CC\ee_j\|^2 - \frac{1}{2}\|\CC\xx_\alpha - \CC\ee_j\|^2 \\
     &\quad + \left(\WW^{-1}\CC^T \CC (\ee_j - \xx_{\alpha})\right)^T (\WW\yy - \WW\xx_\alpha)\\
    &= \frac{1}{2}\|\CC\yy - \CC\ee_j\|^2 - \frac{1}{2}\|\CC\xx_\alpha - \CC\ee_j\|^2 \\
    &\quad + \left(\CC^T \CC (\ee_j - \xx_{\alpha})\right)^T (\yy - \xx_\alpha)\\  
    & \geq 0,
\end{align*}
where the final inequality follows from the first-order optimality conditions of the convex functional $g(\xx) =\frac{1}{2}\|\CC\xx-\CC\ee_j\|^2$, i.e.,
\begin{eqnarray*}
    g(\yy) - g(\xx_\alpha) &\geq& \nabla g(\xx_\alpha)^T(\yy-\xx_\alpha) \\ &=& \left(\CC^T \CC (\xx_\alpha - \ee_j)\right)^T (\yy - \xx_\alpha).
\end{eqnarray*}
This shows that $\xx_{\alpha} $ is the unique minimizer of $\mathfrak{J(\xx)}$.
\end{proof}

\section{\large Proof of Theorem \ref{thm:disjoint_supports}} \label{proof_thm:disjoint_supports}
\begin{proof}
     Let 
\begin{equation*}
 \cc = \sum_{j \in \mathcal{J}} \text{sgn}(x_j^*) \frac{\CC \ee_j}{\|\CC \ee_j\|}.
 \label{eqn:c}
\end{equation*}
If we can show that \eqref{cond_1} and \eqref{cond_2} hold for this choice of $\cc$, the \rem{Theorem \ref{thm:disjoint_supports}} will follow immediately from Theorem \ref{thm:support}.

For $ i \in \mathcal{J} $, we have from the orthogonality \rem{\eqref{eq:orthogonality_assumption} of} $\{\CC\ee_j\}_{j\in \JJ}$ that
\begin{equation*}
  \frac{\CC \ee_i}{\|\CC \ee_i\|} \cdot \cc = \frac{\CC \ee_i}{\|\CC \ee_i\|} \cdot \frac{\CC \ee_i}{\|\CC \ee_i\|} \text{sgn}(x_i^*) = \text{sgn}(x_i^*),  
\end{equation*}
which shows that \eqref{cond_1} holds.

For $ i \in \mathcal{J}^c $, the support assumption \rem{\eqref{eq:disjoint_assumption}} implies that we have at most one $k \in \mathcal{J}$ such that $i \in supp(\CC^T\CC\ee_k)$. Consequently,
\begin{eqnarray}
  \frac{\CC \ee_i}{\|\CC \ee_i\|} \cdot \cc &=& \sum_{j \in \mathcal{J}} \text{sgn}(x_j^*) \frac{\CC \ee_i \cdot \CC \ee_j}{\|\CC \ee_i\| \|\CC \ee_j\|} \nonumber\\ 
 &=& \sum_{j \in \mathcal{J}} \text{sgn}(x_j^*) \frac{ \ee_i \cdot \CC^T\CC \ee_j}{\|\CC \ee_i\| \|\CC \ee_j\|} \nonumber\\ 
&=& \text{sgn}(x_k^*) \frac{ \ee_i \cdot \CC^T\CC \ee_k}{\|\CC \ee_i\| \|\CC \ee_k\|} \nonumber\\
&=& \text{sgn}(x_k^*) \frac{\CC \ee_i \cdot \CC \ee_k}{\|\CC \ee_i\| \|\CC \ee_k\|}. \label{eq:supeq}
\end{eqnarray}
Invoking Cauchy Schwartz' inequality, it follows that 
\begin{equation*}
    |\CC\ee_i \cdot \CC\ee_k| <  \|\CC \ee_i\| \|\CC \ee_k\|,
\end{equation*}
where the strict inequality can be asserted from the non-parallelism assumption \eqref{eq:nonpar}. Inserting this in \eqref{eq:supeq} gives
\begin{equation*}
 \left |\frac{\CC\ee_i}{\|\CC\ee_i\|}\cdot \cc \right|  < 1,
\end{equation*}
which shows that also condition \eqref{cond_2} of Theorem \ref{thm:support} is satisfied.

On the other hand, if $i \in \JJ^c$ and $i \notin supp(\CC^T\CC\ee_j)$ for any $j \in \JJ$, we get that
\begin{equation*}
    \frac{\CC \ee_i}{\|\CC \ee_i\|} \cdot \cc = 0,
\end{equation*}
showing that the condition \eqref{cond_2} also holds in this case. Thus, we can conclude that $\xx^*$ is a solution to the problem \eqref{II-Ax =Ax*}.

To prove the uniqueness, assume that there exists another minimizer $\yy$. Since both \eqref{cond_1} and \eqref{cond_2} are shown to hold, it follows from Theorem \ref{thm:support} that $\textnormal{supp}(\yy) \subset \textnormal{supp}(\xx^*)$.
Consequently, we can write $\AAA\xx_\alpha = \AAA\yy$ in the form
\begin{equation*}
    \AAA \sum_{j \in \mathcal{J}} y_j\ee_j =  \AAA \sum_{j \in \mathcal{J}} x_j^{*}\ee_j.
\end{equation*}
Furthermore, we can multiply with $\BB$ to obtain
\begin{equation*}
     \sum_{j \in \mathcal{J}}  y_j \CC \ee_j = \sum_{j \in \mathcal{J}} x_j^{*} \CC \ee_j
\end{equation*}
The orthogonality of $\{\CC \ee_j\}_{j \in \JJ}$ ensures that $y_j$ must be equal to $x_j^{*}$ for all $j \in \mathcal{J}$, which implies uniqueness.
 \end{proof}

\bibliographystyle{abbrv} %unsrt
\bibliography{references}

\end{document}